\def\supp{\mathop{\mathrm{supp}}\nolimits}
\newtheorem{theorem}{Theorem}[section]
\newtheorem{remark}{Remark}[section]
\newtheorem{lemma}{Lemma}[section]
\newtheorem{corollary}{Corollary}[section]
\numberwithin{equation}{section}
\def\RR{\mathbb{R}}
\def\ZZ{\mathbb{Z}}
\def\NN{\mathbb{N}}
\newcommand{\vp}{v.p.}
\def\D{\,\mathrm d}
\def\e{\mathrm e}
\def\i{\mathrm i}
\def\rank{\mathop{\rm rank}\nolimits}
\title{ Construction of frames for shift-invariant spaces}
\author{Stevan Pilipovi\' c$^{1}$, Suzana Simi\' c$^{2}$}
\begin{document}
\date{}
\maketitle

\medskip

\medskip

\noindent
{\bf 2000 Mathematics Subject Classification}: 42C15, 42C40,
42C99, 46B15, 46B35, 46B20

\medskip

\noindent
{\bf Key Words and Phrases}:{$p$-frame; Banach frame; weighted shift-invariant
space.}

\begin{abstract}
We construct a sequence $\{\phi_i(\cdot-j)\mid j\in{\ZZ},\,i=1,\ldots,r \}$ which constitutes a $p$-frame for
  the weighted shift-invariant space
\[V^p_{\mu}(\Phi)=\Big\{\sum\limits_{i=1}^r\sum\limits_{j\in{\mathbb{Z}}}c_i(j)\phi_i(\cdot-j)
\;\Big|\;
\{c_i(j)\}_{j\in{\mathbb{Z}}}\in\ell^p_{\mu},\;i=1,\ldots,r\Big\},\;
p\in[1,\infty],\] and generates a closed shift-invariant subspace of $L^p_\mu(\mathbb{R})$.  The first construction  is obtained by choosing functions $\phi_i$, $i=1,\ldots,r$, with compactly supported Fourier transforms $\widehat{\phi}_i$, $i=1,\ldots,r$. The second construction,
with compactly supported $\phi_i,i=1,...,r,$ gives the Riesz basis.

\end{abstract}

\maketitle

\section{Introduction and preliminaries}\label{sec:1}
\indent

The shift-invariant spaces $V^p_\mu(\Phi)$, $p\in[1,\infty]$, quoted in the abstract, are used in the wavelet analysis, approximation theory, sampling theory, etc. They have been extensively studied in recent years by many authors \cite{ast}--\cite{ji3}).
The aim of this paper is to construct
$V^p_{\mu}(\Phi )$, $p\in[1,\infty]$, spaces with specially
chosen functions $\phi_i$, $i=1,\ldots,r$, which generate its
$p$-frame. These results expand and correct the construction obtained in \cite{pilsim}.
For the first construction, we take functions $\phi_i$, $i=1,\ldots,r$, so that the Fourier transforms are compactly supported smooth functions. Also, we derive the conditions for the collection
 $\{\phi_i(\cdot-k)\mid k\in{\ZZ},\, i=1,\ldots,r\}$ to
 form  a Riesz basis for $V^p_{\mu}(\Phi)$. We note that the properties of the
constructed
 frame guarantee the feasibility of a stable and continuous
 reconstruction algorithm in  $V^p_{\mu}(\Phi)$ \cite{xs}.
  We generalize these results for a shift-invariant subspace of $L^p_\mu(\RR^d)$.  The second construction is obtained by choosing compactly supported functions $\phi_i$, $i=1,\ldots,r$. In this way, we obtain the Riesz basis.

This paper is organized as follows. In Section~\ref{sec:2} we quote some
basic properties of certain subspaces of the  weighted $L^p$ and $\ell^p$
spaces. In Section~\ref{sec:3}
  we derive the conditions for the functions of the form $\widehat{\phi}_i(\xi)=\theta(\xi+k_i\pi)$, $k_i\in\mathbb{Z}$, $i=1,2,...,r$, $r\in\NN$,
to form a Riesz basis for $V^p_{\mu}(\Phi)$. We also show that using functions of the form $\widehat\phi_{i}(\cdot)=\theta(\cdot+i\pi)$, $i=1,\ldots,r$, where $\theta$ is compactly supported smooth function who's length of support is less than or equal to $2\pi$ we can not construct a $p$-frame for the shift-invariant space $V^p_{\mu}(\Phi)$.  In Section ~\ref{sec:4} we construct a sequence $\{\phi_i(\cdot-j)\mid j\in{\ZZ}^d,\, i=0,\ldots,r\}$, where $r\in2\NN$ or $r\in 3\NN$,  which constitutes a $p$-frame for
  the weighted shift-invariant space
$V^p_{\mu}(\Phi)$. Our construction shows that the sampling and
reconstruction problem in the shift-invariant spaces is robust in the sense of \cite{ast}. In Section~\ref{sec:5} we construct $p$-Riesz basis by using compactly supported functions $\phi_{i}$, $i=1,\ldots,r$.

\section{Basic spaces}\label{sec:2}

\indent

Let a function $\omega$ be nonnegative, continuous,
symmetric,
submultiplicative, i.e., $\omega(x+y)\leq\omega(x)\omega(y)$,
$x$, $y\in{\RR}^{d}$, and let a function $\mu $ be
$\omega$-moderate, i.e., $\mu  (x+y)\leq C\omega(x)\mu (y)$, $x$,
$y\in{\RR}^{d}$. Functions $\mu$ and $\omega$ are called weights.
We  consider the weighted function spaces $L^p_\mu$ and the weighted sequence spaces
$\ell^p_\mu({\ZZ}^d)$ with $\omega$-moderate weights $\mu$ (see\cite{pilsim}).
Let
$p\in[1,\infty)$. Then (with obvious modification for $p=\infty$)
$$\mathcal{L}^p_\omega= \Big\{f\;\big|\;
\|f\|_{\mathcal{L}^p_\omega}=\Big(\int\limits_{[0,1]^d}\Big(\sum\limits_{j\in{\ZZ}^d}|f(x+j)|\omega(x+j)\Big)
^pdx\Big)^{1/p}<+\infty\Big\},$$

$${W}^p_\omega:=\Big\{f\;\big|\;\|f\|_{{W}^p_\omega}=\Big(\sum\limits_{j\in{\ZZ}^d}\sup
\limits_{x\in[0,1]^d}|f(x+j)|^p\omega(j)^p\Big)^{1/p}<+\infty\Big\}.$$

In what follows, we use the notation
$\Phi=(\phi_1,\ldots,\phi_r)^T$. Define
$\|\Phi\|_{\mathcal{H}}=\sum\limits_{i=1}^r\|\phi_i\|_{\mathcal{H}}$,
where $\mathcal{H}=L^p_\omega$, $\mathcal{L}^p_\omega$ or
$W^p_\omega$,\; $p\in[1,\infty]$. With $\mathcal{F}\phi=\widehat{\phi}$ we denote the Fourier transform of the function $\phi$, i.e.\ $\widehat{\phi}(\xi)=\int_{\mathbb{R}^d}\phi(x)e^{-\i \pi x\cdot\xi} \D x$, $\xi\in\mathbb{R}^d$.

The concept of a $p$-frame is introduced in \cite{ast}:

It is said that a collection $\{\phi_i(\cdot-j)\mid j\in
{\ZZ}^d, i=1,\ldots,r\}$ is a $p$-frame for
$V^p_\mu(\Phi)$ if there exists a positive constant $C$ (dependant upon
$\Phi$, $p$ and $\omega$) such that
\begin{equation}\label{pframe}
C^{-1}\|f\|_{L^p_\mu}\leq\sum\limits_{i=1}^r\Big\|\Big\{\int\limits_{{\RR}^d}f(x)
\overline{\phi_i(x-j)}\D x\Big\}_{j\in{\ZZ}^d}\Big\|_{\ell^p_\mu}\leq
C\|f\|_{L^p_\mu},\quad f\in V^p_\mu(\Phi).
\end{equation}

Recall \cite{ac} that the shift-invariant spaces are defined by
$$V^p_\mu(\Phi):=\Big\{f\in L^p_\mu\mid
f(\cdot)=\sum\limits_{i=1}^r\sum\limits_{j\in{\ZZ}^d}{c^i_j}\,\phi_i(\cdot-j),
\;\;\{c^i_j\}_{j\in{\ZZ}^d}\in\ell^p_\mu,i=1,\ldots,r\Big\}.$$

\begin{remark}\cite{suza}
Let $\Phi\in
 W^1_\omega$ and let $\mu$ be $\omega$-moderate. Then $V^p_\mu(\Phi)$ is a subspace (not necessarily closed) of
$L^p_\mu$ and $W^p_\mu$ for any $p\in[1,\infty]$.
Clearly $(\ref{pframe})$ implies that $\ell^p_\mu$ and $V^p_\mu(\Phi)$
are isomorphic Banach spaces.
\end{remark}
Let $\Phi=(\phi_1,\ldots,\phi_r)^T$. Let $$[\widehat{\Phi},\widehat{\Phi}](\xi)=
\Big[\sum\limits_{k\in{\ZZ}^d}\widehat{\phi}_i(\xi+2k\pi)\overline{\widehat{\phi}_j(\xi+2k\pi)}\,\Big]_{1\leq
i\leq r,\;1\leq j\leq r},$$ where we assume that
$\widehat{\phi_i}(\xi)\overline{\widehat{\phi}_j(\xi)}$ is
integrable for any $1\leq i,j\leq r.$ Let
$A=[a(j)]_{j\in{\ZZ}^d}$ be an $r\times \infty$ matrix and
$A\overline{A^T}=\Big[\sum\limits_{j\in{\ZZ}^d}a_i(j)\overline{a_{i'}(j)}\Big]_{1\leq
i,i'\leq r}$. Then $ \rank A=\rank A\overline{A^T} $.

We will recall some results from \cite{ast} and \cite{pilsim} which are needed in the sequel.

\begin{lemma}[\cite{ast}]\label{lema1ald}
The following statements are equivalent.
\begin{itemize}
\item[$1)$]
$\rank\big[\widehat{\Phi}(\xi+2j\pi)\big]_{j\in{\ZZ}^d}$
 is a constant function on ${\RR}^d$.
\item[$2)$] $\rank[\widehat{\Phi},\widehat{\Phi}](\xi)$ is a
constant function on ${\RR}^d$. \item[$3)$] There exists a
positive constant $C$ independent of $\xi$ such that
$$C^{-1}[\widehat{\Phi},\widehat{\Phi}](\xi)\leq [\widehat{\Phi},\widehat{\Phi}]
(\xi)\,\overline{[\widehat{\Phi},\widehat{\Phi}](\xi)^T}\leq C\,[\widehat{\Phi},
\widehat{\Phi}](\xi), \quad \xi\in[-\pi,\pi]^d.$$
\end{itemize}
\end{lemma}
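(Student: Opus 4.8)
The plan is to establish the two equivalences $1)\Leftrightarrow 2)$ and $2)\Leftrightarrow 3)$ separately; the first is purely linear-algebraic, while the second rests on the spectral theorem for Hermitian matrices together with a compactness argument on the torus.

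For $1)\Leftrightarrow 2)$ I would fix $\xi$ and write $A(\xi)=\big[\widehat{\Phi}(\xi+2j\pi)\big]_{j\in\ZZ^d}$ as the $r\times\infty$ matrix whose $j$-th column is $\widehat{\Phi}(\xi+2j\pi)=(\widehat{\phi}_1(\xi+2j\pi),\ldots,\widehat{\phi}_r(\xi+2j\pi))^T$. By the definition of the bracket product one has $[\widehat{\Phi},\widehat{\Phi}](\xi)=A(\xi)\overline{A(\xi)^T}$, so the identity $\rank A=\rank A\overline{A^T}$ recalled above gives $\rank A(\xi)=\rank[\widehat{\Phi},\widehat{\Phi}](\xi)$ for every $\xi\in\RR^d$. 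Consequently the function in $1)$ is constant precisely when the function in $2)$ is, which is the desired equivalence.

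For $2)\Leftrightarrow 3)$ I would set $G(\xi)=[\widehat{\Phi},\widehat{\Phi}](\xi)$, which is an $r\times r$ positive semidefinite Hermitian matrix depending continuously and $2\pi\ZZ^d$-periodically on $\xi$ under the standing assumptions on $\Phi$; in particular $\overline{G(\xi)^T}=G(\xi)$, so the inequality in $3)$ reads $C^{-1}G(\xi)\le G(\xi)^2\le C\,G(\xi)$. Diagonalizing $G(\xi)$ and splitting $\CC^r=\ker G(\xi)\oplus\mathrm{range}\,G(\xi)$, this matrix inequality is equivalent to the scalar bounds $C^{-1}\le\lambda\le C$ for every nonzero eigenvalue $\lambda$ of $G(\xi)$, uniformly in $\xi$. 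I would then order the eigenvalues $\lambda_1(\xi)\ge\cdots\ge\lambda_r(\xi)\ge 0$, which depend continuously on $\xi$. If $3)$ holds, each $\lambda_i$ takes values in the disconnected set $\{0\}\cup[C^{-1},C]$, so by connectedness of $\RR^d$ each $\lambda_i$ is either identically $0$ or bounded below by $C^{-1}$; hence $\rank G(\xi)=\#\{i:\lambda_i\not\equiv 0\}$ is constant, which is $2)$. Conversely, if $\rank G(\xi)\equiv k$, then $\lambda_k(\xi)>0=\lambda_{k+1}(\xi)$ for all $\xi$; using $2\pi\ZZ^d$-periodicity it suffices to work on $[-\pi,\pi]^d$, where compactness yields $c:=\min\lambda_k>0$ and $C:=\max\lambda_1<\infty$, so every nonzero eigenvalue of $G(\xi)$ lies in $[c,C]$ and the operator inequality $3)$ follows with the constant $\max(C,c^{-1})$.

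The routine points will be the spectral-calculus step converting the matrix inequality into scalar eigenvalue bounds, and the continuity of $\xi\mapsto\lambda_i(\xi)$ (which follows from the minimax characterization of eigenvalues). The step I expect to be the main obstacle is $2)\Rightarrow 3)$: one has to combine continuity of $G$ with compactness of the fundamental domain $[-\pi,\pi]^d$ to upgrade the pointwise positivity of the smallest nonzero eigenvalue $\lambda_k$ to a uniform lower bound, and it is precisely constancy of the rank that forbids a nonzero eigenvalue from decaying to $0$ somewhere on the torus.
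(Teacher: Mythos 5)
Your argument is correct, and there is nothing in the paper to compare it against: Lemma~\ref{lema1ald} is quoted from \cite{ast} without proof, the paper only recording beforehand the identity $\rank A=\rank A\overline{A^T}$ for $r\times\infty$ matrices, which is exactly the tool you use for $1)\Leftrightarrow 2)$. Your treatment of $2)\Leftrightarrow 3)$ (Hermitian positive semidefiniteness of the bracket, reduction of the operator inequality to the bounds $C^{-1}\le\lambda\le C$ on nonzero eigenvalues, then continuity of the ordered eigenvalues plus connectedness for one direction and periodicity plus compactness of $[-\pi,\pi]^d$ for the other) is the standard proof of this equivalence and is sound; the only points worth making explicit are that continuity of $\xi\mapsto[\widehat{\Phi},\widehat{\Phi}](\xi)$ comes from the standing assumption $\Phi\in(W^1_\omega)^r$ (the autocorrelation sequence is summable, so the bracket is a continuous periodic function), and the trivial degenerate case in $2)\Rightarrow 3)$ where the constant rank is $0$, in which $3)$ holds vacuously.
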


The next theorem (\cite{pilsim}) derives necessary and sufficient conditions for an indexed family $\{\phi_i(\cdot-j)\mid j\in{\ZZ}^d, i=1,\ldots,r\}$ to constitute a $p$-frame for $V^p_\mu(\Phi)$, which is equivalent with the closedness of this space in $L^p_\mu$. Thus, it is shown that under appropriate conditions on the frame vectors, there is an equivalence between the concept of $p$-frames, Banach frames  and the closedness of the space they generate.

\begin{theorem}[\cite{pilsim}]\label{main}
Let $\Phi=(\phi_1,\ldots,\phi_r)^T\in (W^1_\omega)^r$,
$p_0\in[1,\infty]$, and let $\mu$ be $\omega$-moderate. The
following statements are equivalent.
\begin{itemize}
\item[$i)$] \ $V^{p_0}_\mu(\Phi)$ is closed in $L^{p_0}_\mu$.
\item[$ii)$]\label{dva} \ $\{\phi_i(\cdot-j)\mid j\in{\ZZ}^d,i=1,\ldots,r\}$ is a $p_0$-frame for $V^{p_0}_\mu(\Phi)$.
\item[$iii)$] \ There exists a positive constant $C$ such that
$$C^{-1}[\widehat{\Phi},\widehat{\Phi}](\xi)\leq[\widehat{\Phi},
\widehat{\Phi}](\xi)\overline{[\widehat{\Phi},\widehat{\Phi}](\xi)^T}\leq
C[\widehat{\Phi},\widehat{\Phi}](\xi),\quad \xi\in[-\pi,\pi]^d.$$
\item[$iv)$] There exist positive constants $C_1$ and $C_2$ (depend on $\Phi$ and $\omega$) such that
\begin{equation}\label{cetiri}
C_1\|f\|_{L^{p_0}_\mu}\leq\inf\limits_{
f =\sum\limits_{i=1}^r\phi_i*'c^i}\sum\limits_{i=1}^r\|\{c^i_j\}_{j\in{\ZZ}^d}\|_{\ell^{p_0}_\mu}\leq
C_2\|f\|_{L^{p_0}_\mu},\;\;f\in V^{p_0}_\mu(\Phi).
\end{equation}
\item[$v)$] \ There exists
$\Psi=(\psi_1,\ldots,\psi_r)^T\in (W^1_\omega)^r$, such that
$$f=\sum\limits_{i=1}^r\sum\limits_{j\in{\ZZ}^d}\langle
f,\psi_i(\cdot-j)\rangle\phi_i(\cdot-j)=\sum\limits_{i=1}^r\sum\limits_{j\in{\ZZ}^d}
\langle f,\phi_i(\cdot-j)\rangle\psi_i(\cdot-j),\;\;f\in V^{p_0}_\mu(\Phi).$$
\end{itemize}
\end{theorem}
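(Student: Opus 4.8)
The plan is to organize the five conditions around the synthesis operator $T_\Phi\colon(\ell^{p_0}_\mu)^r\to L^{p_0}_\mu$ given by $T_\Phi(c^1,\dots,c^r)=\sum_{i=1}^r\phi_i *' c^i$ and its formal adjoint, the analysis operator $C_\Phi f=\{\langle f,\phi_i(\cdot-j)\rangle\}_{j\in\ZZ^d,\,1\le i\le r}$. The routine first step is to record that, since $\Phi\in(W^1_\omega)^r$ and $\mu$ is $\omega$-moderate, the amalgam convolution estimate $\|\phi_i *' c^i\|_{L^p_\mu}\le C\|\phi_i\|_{W^1_\omega}\|c^i\|_{\ell^p_\mu}$ and its dual counterpart for $C_\Phi$ hold with constants uniform in $p\in[1,\infty]$; in particular $V^{p_0}_\mu(\Phi)=\mathrm{ran}\,T_\Phi$ and all operators appearing below are bounded on the relevant scale.

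With these in hand I would dispatch the ``soft'' equivalences. For $(i)\Leftrightarrow(iv)$: up to the equivalence of $\sum_i\|c^i\|_{\ell^{p_0}_\mu}$ with the product norm on $(\ell^{p_0}_\mu)^r$, the infimum in \eqref{cetiri} is exactly the quotient norm of $(\ell^{p_0}_\mu)^r/\ker T_\Phi$ transported to $\mathrm{ran}\,T_\Phi$, so by the bounded inverse theorem the range is closed in $L^{p_0}_\mu$ iff that quotient norm is equivalent there to $\|\cdot\|_{L^{p_0}_\mu}$, which is $(iv)$. For $(v)\Rightarrow(ii)$: insert the two reconstruction identities into $\|f\|_{L^{p_0}_\mu}$ and into $\|C_\Phi f\|$ and apply the amalgam estimates with $\Psi\in(W^1_\omega)^r$, obtaining the lower and the upper $p_0$-frame bound of \eqref{pframe}. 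For $(ii)\Rightarrow(i)$: the lower bound in \eqref{pframe} makes $C_\Phi$ bounded below on $V^{p_0}_\mu(\Phi)$, so that space is isomorphic to a closed subspace of $(\ell^{p_0}_\mu)^r$, hence complete, hence closed in $L^{p_0}_\mu$.

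The substance of the theorem is tying in $(iii)$ and, along with it, producing a single dual generator $\Psi$ that works for all $p_0$. Condition $(iii)$ is a statement on the Fourier side about the Gramian $[\widehat\Phi,\widehat\Phi]$; by Lemma~\ref{lema1ald} it is equivalent to $\rank[\widehat\Phi(\xi+2j\pi)]_{j\in\ZZ^d}$ being a constant function, and the classical $L^2$ theory of finitely generated shift-invariant spaces then identifies this with $\{\phi_i(\cdot-j)\}$ being a frame for its closed span in $L^2(\RR^d)$, i.e.\ with $(ii)$ for $p_0=2$, $\mu\equiv1$. To pass to arbitrary $p_0\in[1,\infty]$ and $\omega$-moderate $\mu$, and to extract $\Psi$, I would invoke a non-commutative Wiener lemma: the bi-infinite matrices whose off-diagonal decay is governed by $W^1_\omega$ form a Banach $\ast$-algebra that is inverse-closed in $\mathcal B(\ell^2)$, and the analogous property holds for the Moore--Penrose pseudo-inverse once the range is complemented. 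Applied to the Gram matrix/operator $G=C_\Phi T_\Phi$, which lies in this algebra because $\Phi\in(W^1_\omega)^r$, this gives that $G^\dagger$ is bounded on every $\ell^{p_0}_\mu$; from $G^\dagger$ one reads off, simultaneously, the $p_0$-frame bounds and a dual $\Psi$ (with $\widehat\Psi=G^\dagger\widehat\Phi$) lying in $(W^1_\omega)^r$ and satisfying the two identities of $(v)$ first on a dense subset and then, by the estimates of the first step, on all of $V^{p_0}_\mu(\Phi)$. This yields $(iii)\Rightarrow(v)$, and together with the soft part closes the cycle once one also proves $(i)\Rightarrow(iii)$: if $(iii)$ fails then by Lemma~\ref{lema1ald} the rank of $[\widehat\Phi(\xi+2j\pi)]_{j\in\ZZ^d}$ jumps, and localizing near a jump produces vectors in $V^{p_0}_\mu(\Phi)$ witnessing that the infimum norm of $(iv)$ cannot be comparable to $\|\cdot\|_{L^{p_0}_\mu}$, contradicting $(i)$.

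I expect the transfer step to be the main obstacle, for two reasons. First, the $\phi_i$ need not be $\ell^2$-independent, so $T_\Phi$ is not injective; one is forced to work with pseudo-inverses and must first verify that $\mathrm{ran}\,T_\Phi$ is a complemented, shift-invariant subspace enjoying the constant-rank property of Lemma~\ref{lema1ald} before the Wiener-type machinery applies. Second, one must check that the matrix algebra used for inverse-closedness is exactly matched both to the hypothesis $\Phi\in(W^1_\omega)^r$ and to the class of admissible weights $\mu$, so that $G^\dagger$ (and hence $\widehat\Psi$) does not leak out of the amalgam class. Everything else reduces to bookkeeping with the amalgam convolution estimates established at the outset.
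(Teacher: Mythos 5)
You should note at the outset that this paper does not prove Theorem~\ref{main} at all: it is imported verbatim from \cite{pilsim} (and ultimately follows the Aldroubi--Sun--Tang scheme), so your proposal can only be measured against that cited proof. Your overall architecture does match it in outline: synthesis/analysis operators with amalgam bounds, the quotient-norm/open-mapping argument for $i)\Leftrightarrow iv)$, a dual generator $\Psi$ obtained from the pseudo-inverse of the Gramian for $iii)\Rightarrow v)$, and a localization-at-a-rank-jump argument tied to Lemma~\ref{lema1ald}. But one link in your cycle is genuinely broken: the step $ii)\Rightarrow i)$. The lower bound in $(\ref{pframe})$ makes $C_\Phi$ bounded below on $V^{p_0}_\mu(\Phi)$, hence an isomorphism of $V^{p_0}_\mu(\Phi)$ onto its image $C_\Phi\bigl(V^{p_0}_\mu(\Phi)\bigr)\subset(\ell^{p_0}_\mu)^r$; but nothing gives you that this image is \emph{closed}. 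Closedness of the image is equivalent to completeness of $V^{p_0}_\mu(\Phi)$, which is exactly statement $i)$, so ``isomorphic to a closed subspace, hence complete, hence closed'' is circular. Since in your plan $ii)$ feeds back into the other conditions only through this implication, the chain $i)\Rightarrow iii)\Rightarrow v)\Rightarrow ii)$ never returns, and the equivalence is not established.

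The repair is the one the cited proof actually uses: prove the contrapositive $\neg iii)\Rightarrow\neg ii)$ directly, by the same localization you sketch for $i)\Rightarrow iii)$ --- at a point where $\rank[\widehat\Phi(\xi+2j\pi)]_{j\in\ZZ^d}$ jumps, build $f_n\in V^{p_0}_\mu(\Phi)$ with coefficient sequences concentrated in frequency near the jump so that $\|C_\Phi f_n\|_{\ell^{p_0}_\mu}/\|f_n\|_{L^{p_0}_\mu}\to0$, contradicting the lower bound in $(\ref{pframe})$. Secondarily, your transfer step $iii)\Rightarrow v)$ is a pointer rather than an argument: you defer to a weighted Wiener-type (inverse-closedness) lemma and yourself observe that for a general submultiplicative $\omega$ this is not automatic (a GRS-type condition or the specific pseudo-inverse analysis of $[\widehat\Phi,\widehat\Phi]$ under the constant-rank hypothesis is needed); that analysis, showing the entries of the pseudo-inverse stay in the right weighted Wiener class so that $\Psi\in(W^1_\omega)^r$, is precisely the technical core of \cite{pilsim} and of Aldroubi--Sun--Tang, and it is not discharged in your proposal.
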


\begin{corollary}[\cite{pilsim}]\label{cor1}
Let $\Phi=(\phi_1,\ldots,\phi_r)^T\in (W^1_\omega)^r$,
$p_0\in[1,\infty]$, and let $\mu$ be $\omega$-moderate.
\begin{itemize}
\item[$i)$] \ If $\{\phi_i(\cdot-j)\mid j\in{\ZZ}^d,i=1,\ldots,r\}$ is a $p_0$-frame for $V^{p_0}_\mu(\Phi)$, then the collection
$\{\phi_i(\cdot-j)\mid j\in{\ZZ}^d,i=1,\ldots,r\}$ is a
$p$-frame for $V^p_\mu(\Phi)$ for any $p\in[1,\infty]$.
\item[$ii)$] \ If $V^{p_0}_\mu(\Phi)$ is closed in $L^{p_0}_\mu$
and $W^{p_0}_\mu$, then $V^p_\mu(\Phi)$ is closed in $L^p_\mu$ and
$W^p_\mu$ for any $p\in[1,\infty]$. \item[$iii)$] \ If
$(\ref{cetiri})$ holds for $p_0$, then it holds for any
$p\in[1,\infty]$.
\end{itemize}
\end{corollary}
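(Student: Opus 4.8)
The plan is to reduce all three assertions to Theorem~\ref{main}. The crucial observation is that item $iii)$ of that theorem --- the two-sided estimate for $[\widehat{\Phi},\widehat{\Phi}](\xi)$ on $[-\pi,\pi]^d$ --- makes no reference to the exponent, so it can serve as a bridge between different values of $p$. Since $\Phi\in(W^1_\omega)^r$ and $\mu$ is $\omega$-moderate are the standing hypotheses of both this corollary and Theorem~\ref{main}, that theorem is available at every exponent $p\in[1,\infty]$, and the scheme in each case will be the same: apply Theorem~\ref{main} at $p_0$ to pass from the hypothesis down to condition $iii)$, then apply Theorem~\ref{main} at an arbitrary $p$ to pass from $iii)$ back up to the desired conclusion.

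For part $i)$, if $\{\phi_i(\cdot-j)\mid j\in{\ZZ}^d,\,i=1,\ldots,r\}$ is a $p_0$-frame for $V^{p_0}_\mu(\Phi)$, this is item $ii)$ of Theorem~\ref{main} at $p_0$; the implication $ii)\Rightarrow iii)$ gives condition $iii)$, and since $iii)$ does not involve the exponent, the implication $iii)\Rightarrow ii)$ of Theorem~\ref{main} applied at an arbitrary $p$ shows that the same collection is a $p$-frame for $V^p_\mu(\Phi)$. Part $iii)$ is proved verbatim with item $iv)$ playing the role of item $ii)$: the estimate $(\ref{cetiri})$ at $p_0$ is item $iv)$ of Theorem~\ref{main} at $p_0$, hence condition $iii)$ holds by $iv)\Rightarrow iii)$, and then $iii)\Rightarrow iv)$ at a general $p$ returns $(\ref{cetiri})$ for that $p$.

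For part $ii)$, closedness of $V^{p_0}_\mu(\Phi)$ in $L^{p_0}_\mu$ is item $i)$ of Theorem~\ref{main} at $p_0$; via $i)\Rightarrow iii)$ and then $iii)\Rightarrow i)$ at a general $p$ we conclude that $V^p_\mu(\Phi)$ is closed in $L^p_\mu$ for every $p\in[1,\infty]$. The claim for the spaces $W^p_\mu$ is handled in exactly the same manner, except that the tool required is the analogue of the equivalence $i)\Leftrightarrow iii)$ with $W^p_\mu$ in place of $L^p_\mu$; this is not literally contained in Theorem~\ref{main} as stated, which is why closedness in $W^{p_0}_\mu$ is kept as a separate hypothesis in $ii)$ rather than being deduced from the $L$-condition. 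I expect establishing that $W$-analogue --- in particular the implication ``condition $iii)$ $\Rightarrow$ $V^p_\mu(\Phi)$ is closed in $W^p_\mu$'' --- to be the only genuine obstacle in the whole corollary; once it is in hand, the transfer for $W^p_\mu$ is word for word the argument used for $L^p_\mu$, and part $ii)$ follows.
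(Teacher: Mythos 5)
Your route is the intended one: the paper states this corollary without proof (it is quoted from \cite{pilsim}), and the argument there is exactly the transfer you describe, using the fact that condition $iii)$ of Theorem~\ref{main} is independent of the exponent, so one passes from the hypothesis at $p_0$ down to $iii)$ and back up at an arbitrary $p$. Parts $i)$ and $iii)$ of your proposal are complete and correct on this basis.

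The one genuine gap is the $W^p_\mu$ half of part $ii)$, which you flag but do not close: you still owe the implication ``condition $iii)$ (equivalently, closedness in $L^p_\mu$) $\Rightarrow$ $V^p_\mu(\Phi)$ is closed in $W^p_\mu$.'' This is not hard, and it does not require a $W$-analogue of the whole equivalence in Theorem~\ref{main}. Since $\Phi\in(W^1_\omega)^r$ and $\mu$ is $\omega$-moderate, one has the amalgam estimate $\big\|\sum_{i}\sum_{j}c^i_j\,\phi_i(\cdot-j)\big\|_{W^p_\mu}\leq C\sum_{i}\|\{c^i_j\}_j\|_{\ell^p_\mu}\,\|\phi_i\|_{W^1_\omega}$, which is what underlies Remark~2.1 ($V^p_\mu(\Phi)\subset W^p_\mu$); combining it with the right-hand inequality of $(\ref{cetiri})$ (available at every $p$ once $iii)$ holds, by $iii)\Rightarrow iv)$) gives $\|f\|_{W^p_\mu}\leq C'\|f\|_{L^p_\mu}$ on $V^p_\mu(\Phi)$, while $\|f\|_{L^p_\mu}\leq C''\|f\|_{W^p_\mu}$ holds for all $f$. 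Hence the $L^p_\mu$- and $W^p_\mu$-norms are equivalent on $V^p_\mu(\Phi)$, so if $f_n\in V^p_\mu(\Phi)$ converges to $f$ in $W^p_\mu$, it converges in $L^p_\mu$ as well, and closedness in $L^p_\mu$ (from $iii)\Rightarrow i)$ at the exponent $p$) yields $f\in V^p_\mu(\Phi)$; thus $V^p_\mu(\Phi)$ is closed in $W^p_\mu$. With this supplement your proof of part $ii)$ is complete, and incidentally it shows that the hypothesis of closedness in $W^{p_0}_\mu$ is not actually needed beyond closedness in $L^{p_0}_\mu$.
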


\bigskip\bigskip

\section{Construction of frames using a compactly supported smooth function}\label{sec:3}

\indent

Considering the length of the support of a function $\theta$ and defining a function $\Phi$ in an appropriate way using $\theta$, we have different cases for the rank of the matrix $[\widehat{\Phi}(\xi+2j\pi)]_{j\in{\ZZ}}$ described in Theorem~\ref{te:nova2}.

First, we consider the next claim:\\
{\it Let  $\theta\in C^{\infty}_0(\RR)$ be a positive function such that
${\theta}(x)>0$, $x\in A$, $A\subset[-\pi,\pi]$, and $\supp\theta\subseteq[-\pi,\pi]$.
Moreover, let
$$\widehat{\phi}_k(\xi)={\theta}(\xi+k\pi),\quad k\in{\mathbb{Z}},
$$ and $\Phi=(\phi_i,\phi_{i+1},\ldots,\phi_{i+r})^T$, $i\in{\mathbb{Z}}$, $r\in{\mathbb{N}}$.

Then the rank of the matrix $[\widehat{\Phi}
(\xi+2j\pi)]_{j\in{\mathbb{Z}}}$ is not a constant function on ${\mathbb{R}}$ and it depends on $\xi\in{\mathbb{R}}$.}

As a matter of fact, by the Paley-Wiener theorem,
 $\phi_i\in\mathcal{S}(\mathbb{R})\subset W^1_\mu({\RR})$, $i\in\ZZ$.
For any $i\in\ZZ$, the matrix $[\widehat{\phi}_i,\widehat{\phi}_i](\xi)=\sum\limits_{j\in\ZZ}|\theta(\xi+i\pi+2j\pi)|^2$, $\xi\in \RR$, has the rank $0$ or $1$, depending on $\xi$. Moreover, we have $[\widehat{\phi}_{2i},\widehat{\phi}_{2i}](\pi)=0$ and $[\widehat{\phi}_{2i},\widehat{\phi}_{2i}](0)>0$.
Because of that, the rank of the matrix $[\widehat{\Phi}
(\xi+2j\pi)]_{j\in{\mathbb{Z}}}$ is not a constant function on ${\mathbb{R}}$ and it depends on $\xi\in{\mathbb{R}}$.

\begin{theorem}\label{te:nova2} Let  $\theta\in C^{\infty}_0(\RR)$ be a positive function such that
$\theta(x)>0$, $x\in(-\pi-\varepsilon,\pi+\varepsilon)$, and
 $\supp\theta=[-\pi-\varepsilon,\pi+\varepsilon],$ where $0<\varepsilon<1/4$.
Moreover, let
$$\widehat{\phi}_i(\xi)=\theta(\xi+k_i\pi),\quad k_i\in\mathbb{Z},\;i=1,2,...,r,\;r\in\NN,
$$ and $\Phi=(\phi_1,\phi_2,\ldots,\phi_r)^T$.

1) If  $|k_2-k_1|=2$ and $|k_i-k_j|\geq 2$ for different $i,j\leq r$, then the rank of the matrix $[\widehat{\Phi}
(\xi+2j\pi)]_{j\in{\mathbb{Z}}}$ is a constant function on ${\mathbb{R}}$ and equals $r$.

2) If $|k_2-k_1|=2$ and,  at least for  $k_{i_1}$ and $k_{i_2}$, it holds that $|k_{i_1}-k_{i_2}|=1$, where $1\leq i_1,i_2\leq r$, then the
rank of the matrix $[\widehat{\Phi}
(\xi+2j\pi)]_{j\in{\mathbb{Z}}}$ is a non-constant function on ${\mathbb{R}}$.
\end{theorem}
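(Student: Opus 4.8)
The plan is to analyze the matrix $M(\xi) := [\widehat{\Phi}(\xi+2j\pi)]_{j\in\ZZ}$ by reducing to a finite submatrix and tracking which rows are nonzero as $\xi$ varies. Since $\supp\theta = [-\pi-\varepsilon,\pi+\varepsilon]$ with $\varepsilon<1/4$, each function $\widehat{\phi}_i(\xi) = \theta(\xi+k_i\pi)$ is supported in $[-k_i\pi-\pi-\varepsilon,\,-k_i\pi+\pi+\varepsilon]$, an interval of length $2\pi+2\varepsilon$ slightly exceeding $2\pi$. Consequently, for a fixed $\xi$, the sequence $(\widehat{\phi}_i(\xi+2j\pi))_{j\in\ZZ}$ has at most two nonzero entries, occurring at consecutive values of $j$; the entry is nonzero precisely when $\xi+2j\pi+k_i\pi$ lies in the open support interval. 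By Lemma~\ref{lema1ald}, it is equivalent to study $\rank[\widehat{\Phi},\widehat{\Phi}](\xi)$, the $r\times r$ Gram-type matrix with entries $\sum_{k\in\ZZ}\widehat{\phi}_i(\xi+2k\pi)\overline{\widehat{\phi}_j(\xi+2k\pi)}$; so I would pass to this object and show its rank is the number of indices $i$ for which the $i$-th diagonal entry $\sum_k|\theta(\xi+2k\pi+k_i\pi)|^2$ is nonzero — equivalently, row $i$ of $[\widehat{\Phi},\widehat{\Phi}](\xi)$ is nonzero exactly when $\widehat{\phi}_i$ does not vanish identically on the lattice $\xi+2\pi\ZZ$.

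For part 1), the key point is that when $|k_i-k_j|\geq 2$ for all $i\neq j$, no two of the shifted supports can "collide" in a way that produces linear dependence: more precisely, I will show that for each $\xi$ the nonzero rows of $[\widehat{\Phi},\widehat{\Phi}](\xi)$ are automatically linearly independent because the supports of the relevant $\widehat{\phi}_i$, restricted to the lattice, are pairwise "essentially disjoint" up to the small overlap of size $\varepsilon$, and this small overlap — being controlled by $\varepsilon<1/4$ — cannot create a rank drop (here one uses that $\theta>0$ on the full open interval, so the diagonal entries are strictly positive whenever the support meets the lattice, while the off-diagonal entries involve only the narrow overlap regions). One then checks that every $i\in\{1,\dots,r\}$ contributes a nonzero row for every $\xi$: since the support of $\widehat\phi_i$ has length exceeding $2\pi$, the lattice $\xi+2\pi\ZZ$ always meets it, so all $r$ diagonal entries are strictly positive for every $\xi\in\RR$. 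Hence $\rank M(\xi)=r$ identically, a constant function. The condition $|k_2-k_1|=2$ is used to guarantee $r\geq 2$ is realizable and to fix the minimal-gap configuration; the essential hypothesis is the uniform gap $\geq 2$.

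For part 2), I will exhibit a value of $\xi$ at which the rank drops below its generic value by exploiting the pair with $|k_{i_1}-k_{i_2}|=1$. The idea is that when two defining shifts differ by an odd integer, the translate by $\pi$ sends the lattice $\xi+2\pi\ZZ$ to itself shifted by $\pi$, and one can choose $\xi$ so that $\widehat\phi_{i_1}$ and $\widehat\phi_{i_2}$ have overlapping, in fact proportional, restrictions to $\xi+2\pi\ZZ$ — specifically, choosing $\xi$ near $\pm\pi$ (mod $2\pi$) so that the "tail" of one function's support on the lattice coincides with a single lattice point that also sits in the other's support, while for a nearby generic $\xi$ both contribute full-rank independent rows. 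Concretely, I expect that at $\xi$ where $\xi+2j\pi+k_{i_1}\pi$ and $\xi+2j\pi+k_{i_2}\pi$ fall symmetrically about the support, the rows indexed by $i_1$ and $i_2$ in $[\widehat\Phi,\widehat\Phi](\xi)$ become collinear (or one becomes zero while, at perturbed $\xi$, it does not), forcing $\rank M(\xi)$ to jump. Comparing this with the generic value at a different $\xi$ shows the rank is non-constant. The main obstacle I anticipate is the bookkeeping in part 1): verifying rigorously that the $O(\varepsilon)$ overlaps among the lattice-restricted supports never conspire to lower the rank — this requires a careful argument that the Gram matrix of the nonzero rows is diagonally dominant (or a direct determinant estimate) using $\varepsilon<1/4$ and the strict positivity of $\theta$ on the open interval, rather than a soft support-disjointness argument, since the supports genuinely do overlap slightly.
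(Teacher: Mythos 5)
Your overall strategy (pass to the Gram matrix $[\widehat\Phi,\widehat\Phi](\xi)$, which has the same rank as $[\widehat\Phi(\xi+2j\pi)]_{j\in\ZZ}$, and track which shifted supports meet the lattice $\xi+2\pi\ZZ$) is legitimate, and your part 2 mechanism is essentially the one the paper uses in Lemma~\ref{le:pomocna2}: for $\xi$ in an interval such as $[-\pi+\varepsilon+2\ell\pi,-\varepsilon+2\ell\pi]$ the two rows with $|k_{i_1}-k_{i_2}|=1$ each have a single nonzero lattice value and it sits in the same column, so they become proportional, while for nearby $\xi$ they are independent; to finish one should, as the paper implicitly does, exhibit two explicit values of $\xi$ with different total rank rather than only a local rank drop of the pair. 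But in part 1 there is a genuine gap, and it is exactly at the step you defer. First, ``rank equals the number of nonzero diagonal entries of the Gram matrix'' is not a principle you can invoke; it is precisely the linear-independence statement to be proved. Second, the tool you propose for it --- diagonal dominance of the Gram matrix, using $\varepsilon<1/4$ --- does not suffice as stated: if three shifts $k_i-2,k_i,k_i+2$ all occur and $\xi$ is such that a lattice point lies in the overlap region (say $\xi+2c\pi+k_i\pi=-\pi+t$, $|t|<\varepsilon$), then row $i$ equals $(u,v)$ with $u=\theta(-\pi+t)$, $v=\theta(\pi+t)$, its diagonal Gram entry is $u^2+v^2$ and its off-diagonal sum is $uv+vu=2uv$, so dominance is only weak and fails to be strict whenever $u=v$ (which happens for some $t$ by continuity, e.g.\ for symmetric $\theta$ at $t=0$). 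Weak diagonal dominance does not give invertibility, so your argument would need a refinement (irreducible dominance with strictness at the ends of the chain, or a direct determinant/eigenvalue computation for the resulting tridiagonal block), none of which is indicated.

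The paper avoids all of this bookkeeping with a much simpler observation on the $r\times\infty$ matrix itself: under the hypothesis $|k_i-k_j|\ge 2$, the leftmost nonzero entry of each row lies in a distinct column. Indeed, if a lattice point lies in the length-$2\varepsilon$ overlap of the supports of $\widehat\phi_i$ and $\widehat\phi_j$ with $k_j=k_i+2$, then the lattice point $2\pi$ to its left still lies in the support of $\widehat\phi_j$, so row $j$'s leading entry occurs strictly earlier than row $i$'s; for gaps $\ge 3$ the supports are disjoint. This staircase (row-echelon) structure gives $\rank[\widehat\Phi(\xi+2j\pi)]_{j\in\ZZ}=r$ for every $\xi$ at once, which is the content of Lemma~\ref{le:pomocna1} and of case 1) of the theorem, and it is the key idea missing from your proposal. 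I would recommend either adopting that direct argument or, if you keep the Gram-matrix route, replacing ``diagonal dominance'' by an argument that actually handles the equality case (Taussky-type irreducible dominance along the chain of shifts at mutual distance $2$, or an explicit positive lower bound for the tridiagonal determinant).
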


\begin{proof}
By the Paley-Wiener theorem,
 $\phi_i\in\mathcal{S}(\mathbb{R})\subset
W^1_\mu({\RR})$, $i=1,...,r$. All supporting cases are described in the following lemmas.

\begin{lemma}\label{le:pomocna1}
Let $\Phi=(\phi_{k_1},\phi_{k_2})^T$, $k_2-k_1=2$, $k_1,k_2\in{\ZZ}$. The rank of the matrix $[\widehat{\Phi}(\xi+2j\pi)]_{j\in{\ZZ}}$
is a constant function on ${\RR}$ and equals $2$.
\end{lemma}
\begin{proof}
We have the next two cases.

$1^\circ$ If $\xi\in(-\pi-\varepsilon-k_1\pi+2\ell\pi,-\pi+\varepsilon-k_1\pi+2\ell\pi)$, $\ell\in\ZZ$, for the matrix
$[\widehat{\Phi}
(\xi+2j\pi)]_{j\in{\mathbb{Z}}}$ we obtain $2\times\infty$ matrix
\[\left[\begin{array}{lllllll}
\cdots&     0&        0&      a^1&        b^1&      0&      \cdots\\
\cdots&     0&       a^2&     b^2&         0&      0&      \cdots
\end{array}\right],\] for some $0<a^i,b^i\leq1$, $i=1,2$. It is obvious that $\rank[\widehat{\Phi}
(\xi+2j\pi)]_{j\in{\mathbb{Z}}}=2$, $\xi\in(-\pi-\varepsilon-k_1\pi+2\ell\pi,-\pi+\varepsilon-k_1\pi+2\ell\pi)$, $\ell\in\ZZ$.

$2^\circ$ For $\xi\in[-\pi+\varepsilon-k_1\pi+2\ell\pi,\pi-\varepsilon-k_1\pi+2\ell\pi]$, $\ell\in\ZZ$, there are only two non-zero values $a^1$ and $a^2$ which are in different columns of the matrix $[\widehat{\Phi}
(\xi+2j\pi)]_{j\in{\mathbb{Z}}}$. Since
\[[\widehat{\Phi}
(\xi+2j\pi)]_{j\in{\mathbb{Z}}}=\left[\begin{array}{llllll}
\cdots&     0&         0&      a^1&       0&      \cdots\\
\cdots&     0&      a^2&      0&       0&      \cdots
\end{array}\right]_{2\times\infty},\] it
has the rank $2$ for  all $\xi\in[-\pi+\varepsilon-k_1\pi+2\ell\pi,\pi-\varepsilon-k_1\pi+2\ell\pi]$, $\ell\in\ZZ$.

We conclude that the rank of the matrix $[\widehat{\Phi}
(\xi+2j\pi)]_{j\in{\mathbb{Z}}}$, $\Phi=(\phi_{k_1},\phi_{k_2})^T$, $k_2-k_1=2$, $k_1,k_2\in{\ZZ}$, is a constant function on $\RR$ and equals $2$.

\end{proof}

\begin{lemma}\label{le:pomocna2}
The rank of the matrix $[\widehat{\Phi}(\xi+2j\pi)]_{j\in{\ZZ}}$ is not a constant function on ${\RR}$ if $\Phi=(\phi_{k_1},\phi_{k_2})^T$, $k_2-k_1=1$, $k_1,k_2\in{\ZZ}$.
\end{lemma}
\begin{proof}
In the same way, as in the proof of the Lemma~\ref{le:pomocna1}, we have four different cases for the matrix $[\widehat{\Phi}(\xi+2j\pi)]_{j\in{\ZZ}}$. Without losing generality, let we suppose that $k_1\in 2\ZZ$.

$1^\circ$ If $\xi\in(-\pi-\varepsilon+2\ell\pi,-\pi+\varepsilon+2\ell\pi)$, $\ell\in\ZZ$, then we have
\[[\widehat{\Phi}
(\xi+2j\pi)]_{j\in{\mathbb{Z}}}=
\left[\begin{array}{llllll}
\cdots&     0&        a^1&        b^1&      0&      \cdots\\
\cdots&     0&      a^2&     0&      0&      \cdots
\end{array}\right],\quad0<a^1,a^2,b^1\leq1,\]  and $\rank[\widehat{\Phi}
(\xi+2j\pi)]_{j\in{\mathbb{Z}}}=2$, for all $\xi\in(-\pi-\varepsilon+2\ell\pi,-\pi+\varepsilon+2\ell\pi)$, $\ell\in\ZZ$.

$2^\circ$ For $\xi\in[-\pi+\varepsilon+2\ell\pi,-\varepsilon+2\ell\pi]$, $\ell\in\ZZ$, non-zero values $a^1$ and $a^2$ are in the same column of the matrix  $[\widehat{\Phi}
(\xi+2j\pi)]_{j\in{\mathbb{Z}}}$. For any choice of a $2\times 2$
matrix, we get that the determinant equals $0$. So, we obtain
\[\rank\left[\begin{array}{lllll}
\cdots&         0&      a^1&      0&      \cdots\\
\cdots&         0&      a^2& 0& \cdots
\end{array}\right]=1,\] for all $\xi\in[-\pi+\varepsilon+2\ell\pi,-\varepsilon+2\ell\pi]$, $\ell\in\ZZ$.

$3^\circ$
If $\xi\in(-\varepsilon+2\ell\pi,\varepsilon+2\ell\pi)$, $\ell\in\ZZ$, then the matrix
\[[\widehat{\Phi}
(\xi+2j\pi)]_{j\in{\mathbb{Z}}}=\left[\begin{array}{llllllll}
\cdots& 0&      0&      a^1&      0&      \cdots\\
\cdots& 0&     b^2&      a^2& 0& \cdots
\end{array}\right],\] for some $0<a^1,a^2,b^2\leq1$, has the rank $2$, for all $\xi\in(-\varepsilon+2\ell\pi,\varepsilon+2\ell\pi)$, $\ell\in\ZZ$.

$4^\circ$
For $\xi\in[\varepsilon+2\ell\pi,\pi-\varepsilon+2\ell\pi]$, $\ell\in\ZZ$, there are two non-zero values $a^1$ and $b^2$  in different columns of the matrix $[\widehat{\Phi}
(\xi+2j\pi)]_{j\in{\mathbb{Z}}}$ and the block with these elements determines the rank $2$ for all $\xi\in[-\varepsilon+2\ell\pi,\pi-\varepsilon+2\ell\pi]$, $\ell\in\ZZ$.

Considering all cases, we conclude that the rank of the matrix $[\widehat{\Phi}
(\xi+2j\pi)]_{j\in{\mathbb{Z}}}$,  $\Phi=(\phi_{k_1},\phi_{k_2})^T$, $k_2-k_1=1$, $k_1,k_2\in{\ZZ}$, depends on $\xi\in\RR$ and equals $1$ or $2$. This rank is a non-constant function on $\RR$.
\end{proof}

{\it Proof of Theorem~\ref{te:nova2}}

$1)$ Using Lemma~\ref{le:pomocna1} and Lemma~\ref{le:pomocna2}, it is obvious that if $|k_2-k_1|=2$ and $|k_i-k_j|\geq 2$ for different $i,j\leq r$, then the position of the first non-zero element in each row of the matrix $[\widehat{\Phi}(\xi+2j\pi)]_{j\in{\ZZ}}$ is unique for each row. So, the  rank of the matrix $[\widehat{\Phi}
(\xi+2j\pi)]_{j\in{\ZZ}}$ is a constant function on $\RR$ and equals $r$  for all $\xi\in\RR$.

$2)$ If $|k_2-k_1|=2$ and, at least for  $k_{i_1}$ and $k_{i_2}$, it holds that $|k_{i_1}-k_{i_2}|=1$, $1\leq i_1,i_2\leq r$, then, in the row with the index $i_2$ (suppose, without losing generality, that $i_2\in2\ZZ+1$) we will have a new column with a non-zero element for $\xi\in(-\pi-\varepsilon+2\ell\pi,-\pi+\varepsilon+2\ell\pi)$, $\ell\in\ZZ$, but for $\xi\in[\varepsilon+2\ell\pi,\pi-\varepsilon+2\ell\pi]$, $\ell\in\ZZ$, the positions of all non-zero elements in that row will already appear in the previous columns. It is obvious that the rank of the matrix $[\widehat{\Phi}
(\xi+2j\pi)]_{j\in{\mathbb{Z}}}$  depends on $\xi\in\RR$ and is not the same for all $\xi\in\RR$.

\end{proof}

As a consequence of Theorem~\ref{main} and Theorem~\ref{te:nova2} 1), we have the following result.

\begin{theorem}
Let  the functions $\theta$ and $\Phi$ satisfy all the conditions of Theorem~\ref{te:nova2}~1).
Then the space $V^p_\mu(\Phi)$ is closed in $L^p_\mu$ for any
$p\in[1,\infty]$ and the family $\{\phi_{i}(\cdot -j)\mid
j\in{\ZZ},1\leq i\leq r\}$ is a $p$-Riesz basis for
$V^p_\mu(\Phi)$ for any $p\in[1,\infty]$.
\end{theorem}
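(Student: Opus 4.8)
The plan is to deduce the final theorem directly from the two results it invokes, since the theorem is explicitly stated as a consequence of Theorem~\ref{main} and Theorem~\ref{te:nova2}~1). First I would observe that the hypotheses of the present theorem are precisely the hypotheses of Theorem~\ref{te:nova2}~1): $\theta\in C^\infty_0(\RR)$ positive on $(-\pi-\varepsilon,\pi+\varepsilon)$ with $\supp\theta=[-\pi-\varepsilon,\pi+\varepsilon]$ for $0<\varepsilon<1/4$, the functions $\widehat\phi_i(\xi)=\theta(\xi+k_i\pi)$ with $|k_2-k_1|=2$ and $|k_i-k_j|\ge 2$ for all distinct $i,j\le r$, and $\Phi=(\phi_1,\ldots,\phi_r)^T$. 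By the Paley--Wiener theorem each $\phi_i\in\mathcal S(\RR)\subset W^1_\omega(\RR)$, so $\Phi\in(W^1_\omega)^r$, and $\mu$ is $\omega$-moderate; thus the standing hypotheses of Theorem~\ref{main} are met for every $p_0\in[1,\infty]$.

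Next I would apply Theorem~\ref{te:nova2}~1) to conclude that $\rank[\widehat\Phi(\xi+2j\pi)]_{j\in\ZZ}$ is the constant function $r$ on $\RR$. By Lemma~\ref{lema1ald} (equivalence of 1) and 3)), this gives a positive constant $C$ with
\[
C^{-1}[\widehat\Phi,\widehat\Phi](\xi)\le[\widehat\Phi,\widehat\Phi](\xi)\,\overline{[\widehat\Phi,\widehat\Phi](\xi)^T}\le C\,[\widehat\Phi,\widehat\Phi](\xi),\qquad \xi\in[-\pi,\pi]^d,
\]
which is exactly condition $iii)$ of Theorem~\ref{main}. Therefore, fixing any $p_0\in[1,\infty]$, the equivalences in Theorem~\ref{main} yield that $V^{p_0}_\mu(\Phi)$ is closed in $L^{p_0}_\mu$ and that $\{\phi_i(\cdot-j)\mid j\in\ZZ,\,1\le i\le r\}$ is a $p_0$-frame for $V^{p_0}_\mu(\Phi)$; moreover item $iv)$ gives the two-sided stability estimate \eqref{cetiri}. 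Invoking Corollary~\ref{cor1} then propagates closedness and the frame/stability property from a single exponent to all $p\in[1,\infty]$.

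It remains to upgrade ``$p$-frame'' to ``$p$-Riesz basis''. The point is that the estimate \eqref{cetiri} combined with the (unconditional) synthesis bound for $\Phi\in(W^1_\omega)^r$ shows that the map $\{c^i_j\}\mapsto\sum_i\sum_j c^i_j\,\phi_i(\cdot-j)$ is a bounded isomorphism from $(\ell^p_\mu)^r$ onto $V^p_\mu(\Phi)$ provided this map is injective; and injectivity here follows from the fact that the constant rank equals $r$ (the full number of generators), which forces the generators to be $\ell^p_\mu$-independent --- equivalently, $[\widehat\Phi,\widehat\Phi](\xi)$ is invertible for a.e.\ $\xi$, so no nontrivial sequence can be annihilated. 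Spelling this out is the one place where a little care is needed: I would argue that if $\sum_i\phi_i*'c^i=0$ with $\{c^i_j\}\in\ell^p_\mu$, then passing to the Fourier side the vector of periodizations $(\sum_j c^i_j e^{-\i\pi j\xi})_i$ lies in the kernel of $[\widehat\Phi,\widehat\Phi](\xi)$ for a.e.\ $\xi$, which is $\{0\}$ by the constant-rank-$r$ conclusion, hence all $c^i_j=0$. The main obstacle is thus not the closedness or the frame property --- those are immediate from the quoted results --- but the bookkeeping that turns the constant-rank-$r$ statement into the uniqueness-of-coefficients property defining a Riesz basis, and confirming this holds uniformly in $p$ via Corollary~\ref{cor1}~$iii)$.
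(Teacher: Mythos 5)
Your proposal is correct and follows essentially the same route as the paper, which states this theorem without further proof as an immediate consequence of Theorem~\ref{te:nova2}~1) (constant rank $r$), Lemma~\ref{lema1ald}, Theorem~\ref{main}, and Corollary~\ref{cor1}. The only addition is that you spell out the uniqueness-of-coefficients step turning the constant-rank-$r$ condition into the $p$-Riesz-basis claim, a detail the paper leaves implicit (cf.\ its later remark citing \cite{bor}), and your argument for it is sound.
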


The following theorem is a generalisation of Theorem~\ref{te:nova2} and can be proved in the same way, so we omit the proof.

\begin{theorem}
Let  $\theta\in C^{\infty}_0(\RR)$ be a positive function such that
$\theta(x)>0$, $x\in A$, $A\subset[a,b]$, $b>a$, and
supported by $[a,b]$ where $b-a>2\pi$.
Moreover, let
$$\widehat{\phi_i}(\xi)=\theta(\xi+k_i\pi),\quad k_i\in\mathbb{Z},\;i=1,2,...,r,\;r\in\NN,
$$ and $\Phi=(\phi_1,\phi_2,\ldots,\phi_r)^T$.

1) If  $|k_2-k_1|=2$ and $|k_i-k_j|\geq 2$ for different $i,j\leq r$, then the rank of the matrix $[\widehat{\Phi}
(\xi+2j\pi)]_{j\in{\mathbb{Z}}}$ is a constant function on ${\mathbb{R}}$ and equals $r$.

2) If $|k_2-k_1|=2$ and, at least for  $k_{i_1}$ and $k_{i_2}$, it holds that $|k_{i_1}-k_{i_2}|=1$, where $1\leq i_1,i_2\leq r$, then the
rank of the matrix $[\widehat{\Phi}
(\xi+2j\pi)]_{j\in{\mathbb{Z}}}$ is not a constant function on ${\mathbb{R}}$.
\end{theorem}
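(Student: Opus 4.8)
The plan is to imitate, step by step, the proof of Theorem~\ref{te:nova2}. By the Paley--Wiener theorem $\phi_i\in\mathcal S(\RR)\subset W^1_\mu(\RR)$, $i=1,\dots,r$, so the whole question is the rank of the $r\times\infty$ matrix $M(\xi):=[\theta(\xi+k_i\pi+2j\pi)]_{1\le i\le r,\ j\in\ZZ}$. Fix $\xi$. Since $\theta>0$ on $(a,b)$ and vanishes off $[a,b]$, the $i$-th row of $M(\xi)$ is nonzero exactly at the integers $j$ lying in the open interval $I_i(\xi):=\bigl((a-\xi-k_i\pi)/2\pi,\ (b-\xi-k_i\pi)/2\pi\bigr)$, whose length is $L:=(b-a)/2\pi>1$. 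An open interval of length $>1$ always contains an integer, so no row of $M(\xi)$ vanishes identically; moreover the nonzero entries of row $i$ occupy a block of consecutive columns, the first of which has index $c_i(\xi):=\lfloor(a-\xi-k_i\pi)/2\pi\rfloor+1$. Exactly as in the excerpt, everything reduces to the two-generator situations $|k_a-k_b|=2$ and $|k_a-k_b|=1$, that is, to the analogues of Lemmas~\ref{le:pomocna1} and \ref{le:pomocna2} with $[-\pi-\varepsilon,\pi+\varepsilon]$ replaced by $[a,b]$; their proofs are the same case analyses on $\xi\bmod 2\pi$, only with more --- but still finitely many --- sub-intervals when $L\ge 2$.

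For part~1, relabel so that $k_1<k_2<\cdots<k_r$. From $k_{i+1}-k_i\ge 2$ we get $(a-\xi-k_i\pi)/2\pi-(a-\xi-k_{i+1}\pi)/2\pi=(k_{i+1}-k_i)/2\ge 1$, hence $c_i(\xi)\ge c_{i+1}(\xi)+1$ for every $\xi$; thus $c_r(\xi)<\cdots<c_1(\xi)$ are pairwise distinct for every $\xi$. The $r\times r$ submatrix of $M(\xi)$ on the columns $c_r(\xi),\dots,c_1(\xi)$, with rows reordered to match, is triangular with diagonal entries $\theta(\,\cdot\,)>0$, hence nonsingular; as $M(\xi)$ has only $r$ rows this forces $\rank M(\xi)=r$ for all $\xi$. (The case $r=2$, $|k_2-k_1|=2$, is the analogue of Lemma~\ref{le:pomocna1}; the hypothesis $b-a>2\pi$ is used exactly to keep each $c_i(\xi)$ well defined.) So the rank is the constant function $r$ on $\RR$.

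For part~2, fix indices with $k_{i_2}=k_{i_1}+1$; the left endpoints of $I_{i_1}(\xi)$ and $I_{i_2}(\xi)$ then differ by exactly $1/2$. Consequently, over a period of $\xi$ there is a family of intervals on which $c_{i_1}(\xi)=c_{i_2}(\xi)$ and a complementary family on which these indices differ by $1$. On the complementary family one chooses $\xi$ so that, in addition, all leading columns $c_i(\xi)$ are pairwise distinct (pairs with $k$-difference $\ge 2$ being automatically separated, as in part~1), and the triangular argument of part~1 gives $\rank M(\xi)=r$. On an interval of the first family one restricts $\xi$ further to the sub-regime in which $I_{i_1}(\xi)$ and $I_{i_2}(\xi)$ each contain only the single integer $c_{i_1}(\xi)=c_{i_2}(\xi)$; there rows $i_1$ and $i_2$ of $M(\xi)$ are supported on that one common column and are therefore proportional, so every $r\times r$ minor of $M(\xi)$ vanishes and $\rank M(\xi)\le r-1$. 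Hence $\rank M(\xi)$ is not a constant function on $\RR$; the general situation --- several ``close'' pairs, or $L\ge 2$ --- follows by running the same Lemma~\ref{le:pomocna2}-style case analysis through the finitely many $\xi$-regimes.

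The step I expect to be the real work is the one just indicated for part~2: converting the coincidence $c_{i_1}(\xi)=c_{i_2}(\xi)$ into a genuine drop of $\rank M(\xi)$ --- that is, exhibiting on the collision interval two exactly proportional rows or, more generally, certifying that no $r\times r$ minor of $M(\xi)$ survives there. For $L\in(1,2)$ this is precisely the two singly-supported proportional rows appearing in Lemma~\ref{le:pomocna2}; for larger $L$ each row carries several nonzero columns, the overlap of $I_{i_1}(\xi)$ and $I_{i_2}(\xi)$ realises several patterns, and one must track the contribution of the remaining $r-2$ rows through each pattern. This is routine but lengthy bookkeeping, which is why the statement is recorded with its proof omitted.
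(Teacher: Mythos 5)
Your part 1) is correct, and it is essentially the paper's own mechanism (the case analysis behind Lemma~\ref{le:pomocna1} and Theorem~\ref{te:nova2}~1)) written more cleanly: each row is nonzero exactly on the integers of an interval $I_i(\xi)$ of length $(b-a)/2\pi>1$, a difference $|k_i-k_j|\ge 2$ shifts left endpoints by at least $1$, so the leading nonzero columns $c_i(\xi)$ are pairwise distinct and the corresponding $r\times r$ submatrix is triangular with positive diagonal. (Note that you tacitly read the hypothesis ``$\theta(x)>0$, $x\in A\subset[a,b]$'' as positivity on all of $(a,b)$; that stronger reading is what this argument, and evidently the intended statement, requires.)

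The genuine gap is in part 2), exactly at the step you defer as ``routine but lengthy bookkeeping''. Your only source of a rank drop is a $\xi$-regime in which the two rows with $|k_{i_1}-k_{i_2}|=1$ are each supported in one and the same column, and such a regime exists only when $2\pi<b-a<4\pi$; once $b-a\ge 4\pi$ the close pair never shares a single supporting column, so the proportional singly-supported rows of Lemma~\ref{le:pomocna2} never occur, and a rank drop could only come from an exact linear relation among rows with several nonzero entries, i.e.\ from numerical identities satisfied by the values of $\theta$ --- something no support bookkeeping in the style of Lemmas~\ref{le:pomocna1}--\ref{le:pomocna2} can force. In fact the conclusion itself fails in that range, so the deferred step cannot be carried out: take $r=3$, $(k_1,k_2,k_3)=(0,2,1)$ and $\theta(x)=\e^{-M(x-5\pi/2)^2-\eta/x-\eta/(5\pi-x)}$ on $(0,5\pi)$, extended by $0$, so $\supp\theta=[0,5\pi]$ and $b-a=5\pi>4\pi$. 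The rank is $2\pi$-periodic in $\xi$; for $\xi\in(0,\pi)$ columns $2$ and $-1$ each contain a single nonzero entry (from rows $1$ and $2$), which forces rank $3$; for $\xi\in[\pi,2\pi]$ the rank is below $3$ precisely when $\theta(\xi+\pi)\theta(\xi)\theta(\xi+2\pi)=\theta(\xi+3\pi)\theta(\xi)^{2}+\theta(\xi-\pi)\theta(\xi+2\pi)^{2}$, and for the Gaussian-type $\theta$ above this never happens, since the exponent of the left-hand side beats each exponent on the right by the uniform margin $4\pi^{2}M$ (for $M$ large, $\eta$ small). Hence $\rank[\widehat{\Phi}(\xi+2j\pi)]_{j\in\ZZ}\equiv 3$ although the hypotheses of 2) hold, so your proof (and the bare statement) is only salvageable under the additional restriction $b-a<4\pi$, where it reduces to the paper's Lemma~\ref{le:pomocna2} analysis; the paper hides this difficulty by omitting the proof. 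A secondary flaw in the same part: your route to $\rank=r$ ``choose $\xi$ with all leading columns pairwise distinct'' is not always available, e.g.\ for the sandwiched pattern $k_{i_2}=k_{i_1}+1=k_{i_3}-1$ (such as $(0,1,2)$) the middle row's leading column coincides with a neighbour's for every $\xi$, and one must also use trailing columns, as the paper's lemmas in effect do.
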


\bigskip\bigskip

\section{Construction of frames using several compactly supported smooth functions}\label{sec:4}

\indent

Firstly, we consider two smooth functions with proper compact supports.

\begin{lemma}\label{te:nova3} Let  $\theta\in C^{\infty}_0(\RR)$, $\psi\in C^{\infty}_0(\RR)$ be positive functions such that
\begin{align*}{\theta}(x)>0,\;& x\in(-\varepsilon,2\pi+\varepsilon),\qquad\;\; \supp{\theta}=[-\varepsilon,2\pi+\varepsilon],\\
{\psi}(x)>0,\;& x\in(\varepsilon,2\pi-\varepsilon),\qquad\;\; \;\;\supp{\psi}=[\varepsilon,2\pi-\varepsilon],\quad 0<\varepsilon<1/4.\end{align*}
Moreover, let $\widehat{\phi}_1(\xi)=\theta(\xi)$, $\widehat{\phi}_2(\xi)=\psi(\xi)$, $\xi\in\RR$, and
$\Phi=(\phi_1,\phi_2)^T$. Then the rank of the matrix $[\widehat{\Phi}
(\xi+2j\pi)]_{j\in{\mathbb{Z}}}$ is a constant function on ${\mathbb{R}}$ and equals $1$.
\end{lemma}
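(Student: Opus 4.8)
The plan is to compute the matrix $[\widehat{\Phi}(\xi+2j\pi)]_{j\in\ZZ}$ column by column and show that, for every $\xi\in\RR$, at most one column is non-zero, whence the rank is identically $1$. First I would record the structure of the two generators: $\widehat{\phi}_1(\xi)=\theta(\xi)$ is supported on $[-\varepsilon,2\pi+\varepsilon]$, while $\widehat{\phi}_2(\xi)=\psi(\xi)$ is supported on $[\varepsilon,2\pi-\varepsilon]\subset(-\varepsilon,2\pi+\varepsilon)$. Since the length of each support is strictly less than $4\pi$, for each fixed $\xi$ the $2\pi$-translates $\xi+2j\pi$ hit the interior of $\supp\theta$ for at most \emph{two} consecutive values of $j$, and similarly for $\supp\psi$; so the matrix is $2\times\infty$ with at most two non-zero columns, exactly as in the tables of Lemmas~\ref{le:pomocna1} and~\ref{le:pomocna2}.

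The key observation — and the point where the particular placement of the supports matters — is that the two non-zero entries of row~$1$ (coming from $\theta$) and the two non-zero entries of row~$2$ (coming from $\psi$) are forced to lie in the \emph{same} column as each other whenever both rows are non-zero. Concretely I would reduce $\xi$ modulo $2\pi$ and split into the cases dictated by the endpoints $-\varepsilon,\varepsilon,2\pi-\varepsilon,2\pi+\varepsilon$ (equivalently $\pm\varepsilon$ after shifting). In the regime where $\xi+2j_0\pi\in(\varepsilon,2\pi-\varepsilon)$ for a single $j_0$, both $\theta(\xi+2j_0\pi)>0$ and $\psi(\xi+2j_0\pi)>0$ while \emph{all other} translates give zero for both functions (because $\supp\psi$ sits strictly inside $\supp\theta$ and the gap between consecutive translates exceeds the overlap); hence the matrix has a single non-zero column $(a^1,a^2)^T$ and rank $1$. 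In the transitional regime where a second translate $\xi+2j_1\pi$ enters $\supp\theta$ near an endpoint, $\psi$ still vanishes there, so the second column is $(b^1,0)^T$; but then in that same range the first translate has left $\supp\psi$ as well, so the first column is $(a^1,0)^T$ — again both non-zero entries sit in row~$1$, every $2\times2$ minor vanishes, and the rank is $1$. One checks the remaining endpoint cases the same way.

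Having shown $\rank[\widehat{\Phi}(\xi+2j\pi)]_{j\in\ZZ}=1$ for all $\xi$, the conclusion that it is a constant function equal to $1$ is immediate. The main obstacle is purely bookkeeping: making sure the case division over $\xi\bmod 2\pi$ is exhaustive and that in \emph{every} case the non-zero pattern never produces two non-zero entries in two different columns of the same pair of rows — i.e. that $\supp\psi\subsetneq\Int(\supp\theta)$ together with the length bound $<4\pi$ genuinely prevents a rank-$2$ configuration. This is exactly the mechanism already exploited in Lemma~\ref{le:pomocna2}, so I would phrase the argument by direct analogy with that proof, displaying the $2\times\infty$ matrices in each subcase and reading off that all $2\times2$ determinants vanish.
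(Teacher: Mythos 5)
Your proposal is correct and takes essentially the same route as the paper: reduce $\xi$ modulo $2\pi$, write out the $2\times\infty$ matrix in the two regimes (a translate in the interior of $\supp\psi$, giving a single nonzero column $(a^1,a^2)^T$, versus the transitional intervals around multiples of $2\pi$, where row $2$ vanishes and the nonzero entries sit only in row $1$), and read off rank $1$ in every case, including endpoints. (One harmless slip of wording: row $2$ never has two nonzero entries, since $\supp\psi$ has length $2\pi-2\varepsilon<2\pi$; this does not affect the argument.)
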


\begin{proof}
Note that
 $\phi_i\in\mathcal{S}(\mathbb{R})\subset
W^1_\mu({\RR})$, $i=1,2$.

We have the following two cases.

$1^\circ$ If $\xi\in(-\varepsilon+2\ell\pi,\varepsilon+2\ell\pi)$, $\ell\in\ZZ$, then the matrix
\[[\widehat{\Phi}
(\xi+2j\pi)]_{j\in{\mathbb{Z}}}=\left[\begin{array}{llllll}
\cdots&     0&      a&      b&      0&      \cdots\\
\cdots&     0&      0&      0&      0&      \cdots
\end{array}\right],\quad 0<a,b\leq1,\] has a constant rank and equals $1$.

$2^\circ$ For $\xi\in(\varepsilon+2\ell\pi,2\pi-\varepsilon+2\ell\pi)$, $\ell\in\ZZ$, the rank of the matrix
\[[\widehat{\Phi}
(\xi+2j\pi)]_{j\in{\mathbb{Z}}}=\left[\begin{array}{llllll}
\cdots&     0&      c&      0&      \cdots\\
\cdots&     0&      d&      0&      \cdots
\end{array}\right],\] where $c,d$ are non-zero values, equals $1$. The equivalent matrix is obtained for $\xi=\varepsilon+2\ell\pi$ and $\xi=-\varepsilon+2\ell\pi$, so we conclude that
 $\rank[\widehat{\Phi}
(\xi+2j\pi)]_{j\in{\mathbb{Z}}}=1$, for $\xi\in[\varepsilon+2\ell\pi,2\pi-\varepsilon+2\ell\pi]$, $\ell\in\ZZ$.

Considering these two cases, the rank of the matrix $[\widehat{\Phi}
(\xi+2j\pi)]_{j\in{\mathbb{Z}}}$ is a constant function on $\RR$ and $\rank[\widehat{\Phi}
(\xi+2j\pi)]_{j\in{\mathbb{Z}}}=1$, $\xi\in\RR$.

\end{proof}

Using functions $\theta$ and $\psi$ from Lemma~\ref{te:nova3}, in the next lemma we construct the $p$-frame with four appropriate functions.

\begin{lemma}\label{te:nova4} Let the functions $\theta$ and $\psi$ satisfy all the conditions of Lemma~\ref{te:nova3}.
Moreover, let
$$\widehat{\phi}_k(\xi)={\theta}(\xi+2k\pi), \quad \widehat{\phi}_{k+2}(\xi)={\psi}(\xi+2k\pi),\quad \;k=0,1,$$ and $\Phi=(\phi_0,\phi_1,\phi_2,\phi_3)^T$.

The rank of the matrix $[\widehat{\Phi}
(\xi+2j\pi)]_{j\in{\mathbb{Z}}}$ is a constant function on ${\mathbb{R}}$ and equals $2$.
\end{lemma}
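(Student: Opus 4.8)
The plan is to compute the matrix $[\widehat{\Phi}(\xi+2j\pi)]_{j\in\ZZ}$ explicitly on a fundamental period and show its rank is $2$ everywhere. Since the four generators come in two shifted pairs, $\widehat\phi_0(\xi)=\theta(\xi)$, $\widehat\phi_1(\xi)=\theta(\xi+2\pi)$, $\widehat\phi_2(\xi)=\psi(\xi)$, $\widehat\phi_3(\xi)=\psi(\xi+2\pi)$, the four rows of the matrix are $2\pi$-shifts of only two underlying profiles. First I would record, exactly as in Lemma~\ref{te:nova3}, where each profile is supported: $\theta$ lives on $[-\varepsilon,2\pi+\varepsilon]$ and $\psi$ on $[\varepsilon,2\pi-\varepsilon]$, with $0<\varepsilon<1/4$. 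Because each support has length slightly more than $2\pi$ for $\theta$ and slightly less than $2\pi$ for $\psi$, the sampled sequences $\{\theta(\xi+2j\pi)\}_j$ and $\{\psi(\xi+2j\pi)\}_j$ each have either one or two nonzero entries depending on whether $\xi$ is in the ``overlap zone'' near the endpoints or in the interior.

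Next I would split $\RR$ (by $2\pi$-periodicity it suffices to take $\xi$ in one period, say $[-\varepsilon,2\pi-\varepsilon]$) into the two cases from Lemma~\ref{te:nova3}: the overlap case $\xi\in(-\varepsilon+2\ell\pi,\varepsilon+2\ell\pi)$ and the interior case $\xi\in[\varepsilon+2\ell\pi,2\pi-\varepsilon+2\ell\pi]$. In each case I would write down the $4\times\infty$ matrix: rows $0$ and $1$ are the $\theta$-row from Lemma~\ref{te:nova3} and its shift by one column-block (reflecting the $+2\pi$ shift inside $\widehat\phi_1$); rows $2$ and $3$ are the $\psi$-row and its shift. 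The key observation is that the $\theta$-pair already contributes rank $1$ and the $\psi$-pair contributes rank $1$, and — crucially — the columns where the $\theta$-entries are nonzero are disjoint from the columns where the $\psi$-entries are nonzero, because $\supp\psi=[\varepsilon,2\pi-\varepsilon]$ is strictly inside $\supp\theta$ and the $2\pi$-periodized supports of $\theta$ and $\psi$ occupy different residue positions modulo the column grid in the relevant range. Hence a suitable $2\times2$ minor built from one nonzero $\theta$-entry and one nonzero $\psi$-entry, sitting in two different rows and two different columns, is nonzero, giving rank $\ge 2$; and since only two distinct profiles are involved, the rank is exactly $2$.

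Then I would invoke the identity $\rank A=\rank A\overline{A^T}$ recalled in Section~\ref{sec:2}, or simply note directly from the two case analyses that the rank is the constant $2$ on all of $\RR$, which is the claim. The main obstacle I expect is bookkeeping: keeping careful track of which column of the infinite matrix each nonzero entry lands in as $\xi$ varies, so that the disjointness of the $\theta$-columns and $\psi$-columns is genuinely verified and not merely asserted — in particular checking the boundary values $\xi=\pm\varepsilon+2\ell\pi$ where an entry may vanish, exactly as was done at the end of case $2^\circ$ in Lemma~\ref{te:nova3}. Once the two pictures (overlap case and interior case) are drawn with the column indices made explicit, the rank count is immediate, and no estimates beyond positivity of $\theta$ and $\psi$ on the open supports are needed.
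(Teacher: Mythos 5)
Your overall strategy---write out the $4\times\infty$ matrix explicitly on one period and treat separately the overlap zone $\xi\in(-\varepsilon+2\ell\pi,\varepsilon+2\ell\pi)$ and the interior zone $\xi\in[\varepsilon+2\ell\pi,2\pi-\varepsilon+2\ell\pi]$---is exactly the paper's, but your ``key observation'' is false, and with it the rank accounting. Since $\supp\psi=[\varepsilon,2\pi-\varepsilon]\subset[-\varepsilon,2\pi+\varepsilon]=\supp\theta$, every column in which a $\psi$-row is nonzero is a column in which the corresponding $\theta$-row is also nonzero: the $\theta$-columns and $\psi$-columns are nested, not disjoint. Concretely, for $\xi$ in the open interior zone each of the four rows has exactly one nonzero entry, namely $\theta(\xi)$ at column $j=0$ (row $\phi_0$), $\theta(\xi)$ at $j=-1$ (row $\phi_1$), $\psi(\xi)$ at $j=0$ (row $\phi_2$) and $\psi(\xi)$ at $j=-1$ (row $\phi_3$); rows $\phi_2,\phi_3$ are proportional to rows $\phi_0,\phi_1$, and it is precisely this coincidence of columns---the opposite of your disjointness claim---that caps the rank at $2$ there. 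Your upper-bound argument ``only two distinct profiles are involved, hence rank exactly $2$'' is also invalid: the four rows are two profiles \emph{and their integer shifts}, and shifted copies of a single profile are in general independent; indeed rows $\phi_0$ and $\phi_1$ are linearly independent here, so the $\theta$-pair contributes rank $2$, not $1$ as you assert.

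The breakdown is worst in the overlap zone: for $\xi\in(-\varepsilon+2\ell\pi,\varepsilon+2\ell\pi)$ the two $\psi$-rows vanish identically (since $\psi>0$ only on $(\varepsilon,2\pi-\varepsilon)$), so there is no nonzero $\psi$-entry at all and your proposed $2\times2$ minor mixing a $\theta$-entry with a $\psi$-entry simply does not exist; combined with your claim that the $\theta$-pair has rank $1$, your accounting would yield rank $1$ on that zone, i.e.\ a non-constant rank, contradicting the very statement being proved. The correct count there is that row $\phi_0$ is supported on columns $j=0,1$ and row $\phi_1$ on columns $j=-1,0$, with positive entries, so these two rows alone give rank $2$ (take the minor on columns $-1$ and $1$), the zero $\psi$-rows changing nothing; at the boundary points $\xi\equiv\pm\varepsilon\ (\mathrm{mod}\ 2\pi)$ the $\psi$-rows also vanish while the two $\theta$-rows have single nonzero entries in distinct columns, again rank $2$. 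With this corrected bookkeeping the two-case analysis does yield constant rank $2$, which is the paper's proof (incidentally, the paper's case $1^\circ$ misstates the rank of its displayed matrix as $1$; the displayed matrix has rank $2$).
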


\begin{proof}
The proof is similar to the proof of Lemma~\ref{te:nova3}.

$1^\circ$ If $\xi\in(-\varepsilon+2\ell\pi,\varepsilon+2\ell\pi)$, $\ell\in\ZZ$, then the matrix
\[[\widehat{\Phi}
(\xi+2j\pi)]_{j\in{\mathbb{Z}}}=\left[\begin{array}{ccccccc}
\cdots&     0&      0&      a^1&      b^1&      0&      \cdots\\
\cdots&     0&      a^2&      b^2&      0&      0&      \cdots\\
\cdots&     0&      0&      0&      0&      0&      \cdots\\
\cdots&     0&      0&      0&      0&      0&      \cdots
\end{array}\right],\] where $0<a^i,b^i\leq1$, $i=1,2$, has a constant rank and equals $1$.

$2^\circ$ For $\xi\in[\varepsilon+2\ell\pi,2\pi-\varepsilon+2\ell\pi]$, $\ell\in\ZZ$, we have
\[\rank[\widehat{\Phi}
(\xi+2j\pi)]_{j\in{\mathbb{Z}}}=\left[\begin{array}{ccccccc}
\cdots&     0&     0&      c^1&      0&      \cdots\\
\cdots&     0&     0&      d^1&      0&      \cdots\\
\cdots&     0&     c^2&      0&      0&      \cdots\\
\cdots&     0&     d^2&      0&      0&      \cdots
\end{array}\right]=2,\] where $0<c^i,d^i\leq1$, $i=1,2$.

We conclude that the rank of the matrix $[\widehat{\Phi}
(\xi+2j\pi)]_{j\in{\mathbb{Z}}}$ is a constant function on ${\mathbb{R}}$ and equals $2$.
\end{proof}

Lemma~\ref{te:nova3} can be easily generalised  for an even number of functions $\phi_i$, $i=0,\ldots,2r-1$, with compactly supported $ \widehat{\phi}_i$, $i=0,\ldots,2r-1$. The proof of the next theorem is similar to the previous proofs.

\begin{theorem}\label{te:nova5} Let the functions $\theta$ and $\psi$ satisfy all the conditions of Lemma~\ref{te:nova3}.
Moreover, let
$$\widehat{\phi}_k(\xi)=\theta(\xi+2k\pi), \quad \widehat{\phi}_{k+r}(\xi)={\psi}(\xi+2k\pi),\quad \;k=0,\ldots,r-1,\;r\in\NN,$$ and $\Phi=(\phi_0,\phi_1,\ldots,\phi_{2r-1})^T$.

The following statements hold.
\begin{itemize}
\item[$1^\circ$] \
$\rank[\widehat{\Phi}(\xi+2j\pi)]_{j\in{\mathbb{Z}}}=r$ for all
$\xi\in{\mathbb{R}}$.

\item[$2^\circ$] \ $V^p_\mu(\Phi)$ is closed in $L^p_\mu$ for any
$p\in[1,\infty]$.

\item[$3^\circ$] \ $\{\phi_{i}(\cdot -j)\mid
j\in{\mathbb{Z}},0\leq  i\leq  2r-1\}$ is a $p$-frame for
$V^p_\mu(\Phi)$ for any $p\in[1,\infty]$.
\end{itemize}
\end{theorem}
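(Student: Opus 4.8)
The plan is to follow the template established by Lemma~\ref{te:nova3} and Lemma~\ref{te:nova4}, extending the block-diagonal structure to the general case of $2r$ functions, and then to invoke Theorem~\ref{main} for the second and third assertions. First I would verify that $\phi_i\in\mathcal S(\RR)\subset W^1_\omega(\RR)$ for all $i=0,\ldots,2r-1$: since each $\widehat\phi_i$ is a translate of $\theta$ or $\psi$, both of which are in $C^\infty_0(\RR)$, the Paley--Wiener theorem gives that each $\phi_i$ is Schwartz, hence lies in $W^1_\omega$. This is exactly the hypothesis needed to apply Theorem~\ref{main}, and it also makes all the entries $[\widehat\phi_i,\widehat\phi_j](\xi)$ well-defined and finite.

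The core of the argument is the rank computation for $1^\circ$. I would split $\RR$ into the two families of intervals already appearing in the earlier proofs: the ``gap'' intervals $(-\varepsilon+2\ell\pi,\varepsilon+2\ell\pi)$, $\ell\in\ZZ$, and the ``bulk'' intervals $[\varepsilon+2\ell\pi,2\pi-\varepsilon+2\ell\pi]$, $\ell\in\ZZ$. On a bulk interval, by construction every translate $\theta(\xi+2k\pi)$ contributes one nonzero value and every translate $\psi(\xi+2k\pi)$ contributes one nonzero value, each in a position governed by $k$; because the supports of $\theta$ and $\psi$ both lie in an interval of length slightly more than $2\pi$ and are centered compatibly, for a fixed $\xi$ in the bulk the row $\phi_k$ and the row $\phi_{k+r}$ (the $\theta$- and $\psi$-copies with the same $k$) have their nonzero entries in the same single column, while different values of $k$ give different columns. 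Hence the $2r\times\infty$ matrix reduces, after deleting zero columns, to $r$ columns, each containing a $2\times1$ nonzero block; these $r$ columns are supported on disjoint pairs of rows, so the rank is exactly $r$. On a gap interval the picture is the one in Lemma~\ref{te:nova4}~$1^\circ$ replicated $r$ times: the $\theta$-copy $\phi_k$ has two nonzero entries (because $\xi$ sits in the $2\varepsilon$-overlap region of two consecutive lattice translates of $\theta$), the $\psi$-copy $\phi_{k+r}$ contributes nothing on that interval (its support avoids the endpoints), and again the nonzero entries for different $k$ occupy different column blocks; so the rank is again $r$ — note that although the matrix \emph{looks} different on the two interval types, the rank is the same constant $r$ throughout, which is the point. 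I would also check the boundary points $\xi=\varepsilon+2\ell\pi$ explicitly (as in Lemma~\ref{te:nova3}) to confirm continuity of the rank there. The main obstacle is purely bookkeeping: one must track carefully, as a function of $k$ and of which lattice translate of $\supp\theta$ or $\supp\psi$ contains $\xi+2j\pi$, exactly which column index each nonzero entry lands in, and confirm that the $\theta$-block columns and the $\psi$-block columns never collide across different $k$ — this is where an off-by-one in the indexing would break the disjointness and spoil the rank count.

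Once $1^\circ$ is established, $2^\circ$ and $3^\circ$ are immediate. By $1^\circ$, $\rank[\widehat\Phi(\xi+2j\pi)]_{j\in\ZZ}$ is the constant $r$, so condition $1)$ of Lemma~\ref{lema1ald} holds, hence condition $iii)$ of Theorem~\ref{main} holds (these are the equivalent rank conditions). Since $\Phi\in(W^1_\omega)^r$ — here of length $2r$ — and $\mu$ is $\omega$-moderate, Theorem~\ref{main} applies with any $p_0\in[1,\infty]$: condition $iii)$ gives $i)$, i.e.\ $V^{p_0}_\mu(\Phi)$ is closed in $L^{p_0}_\mu$, proving $2^\circ$, and it gives $ii)$, i.e.\ $\{\phi_i(\cdot-j)\mid j\in\ZZ,\,0\le i\le 2r-1\}$ is a $p_0$-frame for $V^{p_0}_\mu(\Phi)$, proving $3^\circ$. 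Finally, Corollary~\ref{cor1} propagates both statements from one exponent $p_0$ to all $p\in[1,\infty]$, so there is nothing more to do; I would simply remark that the detailed interval analysis is identical to that of Lemmas~\ref{te:nova3} and~\ref{te:nova4} with $r$ repetitions, which is why the paper says the proof is similar and may be omitted.
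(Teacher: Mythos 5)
Your proposal is correct and follows essentially the same route the paper intends (it omits the proof, noting it is similar to Lemmas~\ref{te:nova3} and~\ref{te:nova4}): Paley--Wiener to get $\Phi\in(W^1_\omega)^{2r}$, the case split into the intervals $(-\varepsilon+2\ell\pi,\varepsilon+2\ell\pi)$ and $[\varepsilon+2\ell\pi,2\pi-\varepsilon+2\ell\pi]$ giving constant rank $r$, and then Lemma~\ref{lema1ald}, Theorem~\ref{main} and Corollary~\ref{cor1} for $2^\circ$ and $3^\circ$. One small wording fix: on the gap intervals the supports of consecutive $\theta$-rows are not in ``different column blocks'' --- row $\phi_k$ occupies columns $-k,-k+1$ (for $\ell=0$), so adjacent rows share a column, exactly as in case $1^\circ$ of Lemma~\ref{te:nova4} --- but the staircase (echelon) pattern, in which each row introduces one new column, still gives rank $r$, so the conclusion stands.
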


Now we consider three functions with compact supports.

\begin{lemma}\label{te:nova6} Let  the function $\theta$  satisfies all the conditions of Lemma~\ref{te:nova3}, and let $\tau\in C^{\infty}_0(\RR)$ and $\omega\in C^{\infty}_0(\RR)$ be positive functions such that
\begin{align*}{\tau}(x)>0,\;& x\in(\varepsilon,\pi-\varepsilon)\cup(\pi+\varepsilon,2\pi-\varepsilon),\quad \supp{\tau}=[\varepsilon,\pi-\varepsilon]\cup[\pi+\varepsilon,2\pi-\varepsilon],\\
{\omega}(x)>0,\;& x\in(-3\pi-\varepsilon,-\pi+\varepsilon),\quad\supp{\omega}=[-3\pi-\varepsilon,-\pi+\varepsilon],\quad 0<\varepsilon<1/4.\end{align*}
Moreover, let $\widehat{\phi}_1(\xi)=\theta(\xi)$, $\widehat{\phi}_2(\xi)=\tau(\xi)$, $\widehat{\phi}_3(\xi)=\omega(\xi)$, $\xi\in\RR$, and $\Phi=(\phi_1,\phi_2,\phi_3)^T$. Then
the rank of the matrix $[\widehat{\Phi}
(\xi+2j\pi)]_{j\in{\mathbb{Z}}}$ is a constant function on ${\mathbb{R}}$ and equals $2$.
\end{lemma}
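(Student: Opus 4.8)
The strategy mirrors the proofs of Lemma~\ref{te:nova3} and Lemma~\ref{te:nova6}'s companions: fix $\xi$, exhibit the $3\times\infty$ matrix $[\widehat{\Phi}(\xi+2j\pi)]_{j\in\ZZ}$ explicitly as a function of the fundamental-domain representative of $\xi$, and count its rank by locating the nonzero columns in each of the three rows. First I would note, via the Paley--Wiener theorem, that $\phi_i\in\mathcal S(\RR)\subset W^1_\mu(\RR)$ for $i=1,2,3$, so that the matrix entries are well defined and Theorem~\ref{main} will be applicable afterwards. Then I would reduce to $\xi$ lying in a single period, say $\xi\in[-\varepsilon,2\pi-\varepsilon)$ (shifted suitably), since all three generators are built from $2\pi$-translates. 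The supports are chosen so that, inside one period, the ``active'' intervals are the six subintervals cut out by the endpoints $-\varepsilon,\varepsilon,\pi-\varepsilon,\pi+\varepsilon,2\pi-\varepsilon,2\pi+\varepsilon$ (modulo $2\pi$); on each such subinterval the matrix has a constant combinatorial pattern of zero/nonzero entries, so it suffices to treat finitely many cases exactly as in the earlier lemmas.

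The key computation in each case is to write down which columns of each row are nonzero. Row~1 comes from $\theta$, whose support has length $2\pi+2\varepsilon$, so for generic $\xi$ it contributes two adjacent nonzero columns (on the overlap intervals of length $2\varepsilon$) and otherwise exactly one nonzero column. Row~2 comes from $\tau$, whose support is the union of two intervals of length $\pi-2\varepsilon$ inside one period; since the two pieces sit in complementary halves of the period, for each $\xi$ the periodized row $\tau$ contributes exactly one nonzero column (the two pieces never overlap under $2\pi$-periodization, and together they cover all of $[\varepsilon,2\pi-\varepsilon]$ minus the gap around $\pi$). Row~3 comes from $\omega$, whose support has length $2\pi-2\varepsilon$, hence always at most one nonzero column per $\xi$, and exactly one on $(-3\pi-\varepsilon,-\pi+\varepsilon)$-translates, i.e.\ on a full-length subinterval of the period. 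The point to verify is that the nonzero column of row~3 is always \emph{distinct} from the (single) nonzero column of row~2 — this is where the specific placement $\supp\omega=[-3\pi-\varepsilon,-\pi+\varepsilon]$ versus $\supp\tau\subset[\varepsilon,2\pi-\varepsilon]$ matters: modulo $2\pi$ these describe translates that land in disjoint column-slots — so rows~2 and~3 are linearly independent for every $\xi$, giving rank at least $2$. Meanwhile row~1, being a $\theta$-row, always has its nonzero entries in column-slots that coincide with those already occupied by rows~2 and~3 (because $\supp\theta$ spans the whole relevant range), so it adds nothing new and the rank is exactly~$2$ on every subinterval.

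Concretely I would split into the cases $\xi$ in (a) the small overlap windows of length $2\varepsilon$ around the integer multiples of $\pi$ where $\theta$ has two nonzero columns, and (b) the complementary ``interior'' windows where each $\theta$-row has a single nonzero column, and in each case display the $3\times\infty$ matrix in the bracketed format used above, then read off a nonsingular $2\times2$ block (from rows~2,~3) and confirm no $3\times3$ block is nonsingular. Having shown $\rank[\widehat{\Phi}(\xi+2j\pi)]_{j\in\ZZ}\equiv 2$, the conclusion that this rank is a constant function on $\RR$ is immediate, and one could immediately append (though the lemma only claims the rank statement) that by Theorem~\ref{main} the space $V^p_\mu(\Phi)$ is closed and the shifts form a $p$-frame.

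\textbf{Main obstacle.} The bookkeeping is the only real difficulty: one must be careful that under $2\pi$-periodization the two disjoint pieces of $\supp\tau$ do not collide (this uses $\varepsilon<1/4$, so the gap $(\pi-\varepsilon,\pi+\varepsilon)$ around $\pi$ and the endpoints stay clear of each other and of the period boundary), and that the $\omega$-column and the $\tau$-column never coincide for any $\xi$. Verifying this disjointness of column-indices across all subintervals — rather than any hard analysis — is the step most prone to error, and is where I would spend the most care; everything else is a direct transcription of the matrix patterns and an elementary rank count, exactly as in Lemmas~\ref{te:nova3}--\ref{te:nova4}.
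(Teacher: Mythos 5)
Your overall strategy (reduce to one period, split into finitely many windows at the break points $\pm\varepsilon,\pi\pm\varepsilon,2\pi-\varepsilon$ modulo $2\pi$, display the $3\times\infty$ matrix in each window and count the rank) is exactly the paper's, which treats four such windows explicitly. However, the combinatorial bookkeeping that you yourself single out as the heart of the proof is wrong in the proposal, and the rank-$2$ mechanism you describe is not the one that actually operates here. First, $\supp\omega=[-3\pi-\varepsilon,-\pi+\varepsilon]$ has length $2\pi+2\varepsilon$, not $2\pi-2\varepsilon$, so the $\omega$-row has \emph{two} nonzero columns whenever $\xi$ lies within $\varepsilon$ of an odd multiple of $\pi$, contradicting your ``always at most one nonzero column.'' Second, the $\tau$-row is \emph{identically zero} whenever $\xi$ lies within $\varepsilon$ of any multiple of $\pi$ (the translates $\xi+2j\pi$ then fall into the gaps of $\supp\tau$ around $0$, $\pi$, $2\pi$ mod $2\pi$), so your key step ``rows 2 and 3 are linearly independent for every $\xi$, giving rank at least $2$'' fails precisely on those windows --- which are the paper's cases $1^\circ$ and $3^\circ$. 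There the lower bound rank $\geq 2$ comes from the $\theta$-row together with the $\omega$-row (near multiples of $2\pi$ the $\theta$-row occupies two columns; near odd multiples of $\pi$ the $\omega$-row does), so your further claim that the $\theta$-row ``adds nothing new'' because its nonzero slots coincide with those of rows 2 and 3 is also false: e.g.\ for $\xi\in(-\varepsilon,\varepsilon)$ the $\tau$-row is zero, the $\omega$-row sits in column $j=-1$, and the $\theta$-row sits in columns $j=0,1$, so without row 1 the rank would be $1$.

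Likewise, on the generic windows $(\varepsilon,\pi-\varepsilon)$ and $(\pi+\varepsilon,2\pi-\varepsilon)$ (mod $2\pi$) the reason the rank does not reach $3$ is not the column-disjointness you emphasize but the opposite phenomenon: two of the three rows have their single nonzero entries in one and the same column, while the third row occupies a different column, giving exactly two independent columns. So both directions of your rank count rest on an incorrect picture of which columns are active; if you actually write out the four matrices as the paper does (cases $1^\circ$--$4^\circ$), the correct patterns emerge and the conclusion $\rank\equiv 2$ follows, but as stated your verification steps would not go through. Also note your case split into only two types of windows (overlap windows ``around integer multiples of $\pi$ where $\theta$ has two nonzero columns'' versus interior windows) conflates multiples of $2\pi$ with odd multiples of $\pi$; $\theta$ doubles up only at the former, $\omega$ only at the latter, and $\tau$ vanishes at both, so four qualitatively different cases per period must be checked.
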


\begin{proof}
We have four different forms for the matrix $[\widehat{\Phi}
(\xi+2j\pi)]_{j\in{\mathbb{Z}}}$  and in all cases the rank of the matrix is $2$.

Now we will show all possible cases. Denote with $a^i$, $i=1,2,3$ and $b^i$, $i=1,2$, some positive values.

$1^\circ$
\[[\widehat{\Phi}
(\xi+2j\pi)]_{j\in{\mathbb{Z}}}=\left[\begin{array}{cccccccc}
\cdots&     0&     a^1&      b^1&      0&       0&     \cdots\\
\cdots&     0&     0&      0&      a^2&         0&     \cdots\\
\cdots&     0&     0&      0&      0&       0&     \cdots
\end{array}\right],\quad \xi\in(-\varepsilon+2\ell\pi,\varepsilon+2\ell\pi).\]

$2^\circ$
\[[\widehat{\Phi}
(\xi+2j\pi)]_{j\in{\mathbb{Z}}}=\left[\begin{array}{cccccccc}
\cdots&     0&      b^1&      0&       0&     \cdots\\
\cdots&     0&      0&      a^2&         0&     \cdots\\
\cdots&     0&      a^3&      0&       0&     \cdots
\end{array}\right],\quad \xi\in[\varepsilon+2\ell\pi,\pi-\varepsilon+2\ell\pi].\]

$3^\circ$
\[[\widehat{\Phi}
(\xi+2j\pi)]_{j\in{\mathbb{Z}}}=\left[\begin{array}{ccccccccc}
\cdots&     0&      b^1&      0&       0&0&      \cdots\\
\cdots&     0&      0&      a^2&         b^2& 0&     \cdots\\
\cdots&     0&      0&      0&       0&     0& \cdots
\end{array}\right],\quad \xi\in(\pi-\varepsilon+2\ell\pi,\pi+\varepsilon+2\ell\pi).\]

$4^\circ$
\[[\widehat{\Phi}
(\xi+2j\pi)]_{j\in{\mathbb{Z}}}=\left[\begin{array}{ccccccccc}
\cdots&     0&      b^1&      0&       0&0&      \cdots\\
\cdots&     0&      0&      0&         b^2& 0&     \cdots\\
\cdots&     0&      a^3&      0&       0&     0& \cdots
\end{array}\right],\quad \xi\in[\pi+\varepsilon+2\ell\pi,2\pi-\varepsilon+2\ell\pi].\]

\end{proof}

\begin{remark}
In Lemma~\ref{te:nova6} the support of the function $\omega$ must have an empty intersection with the supports of $\theta$ and $\tau$. In the opposite case, i.e. $\supp{\theta}\cap\supp{\tau}\cap\supp{\omega}\neq\emptyset$,  the rank of the matrix $[\widehat{\Phi}
(\xi+2j\pi)]_{j\in{\mathbb{Z}}}$ is a non-constant function on ${\mathbb{R}}$.
\end{remark}

Lemma~\ref{te:nova6} can be easily generalised  for functions $\phi_i$, $i=0,\ldots,3r-1$, with compactly supported $\widehat{\phi}_i$, $i=0,\ldots,3r-1$. The proof of the next theorem is similar to the previous proofs.

\begin{theorem}\label{te:nova7}  Let the functions $\theta$, $\tau$ and $\omega$ satisfy all the conditions of Lemma~\ref{te:nova6}.
Moreover, let
$$\widehat{\phi}_k(\xi)=\theta(\xi+2k\pi), \; \widehat{\phi}_{k+r}(\xi)=\tau(\xi+2k\pi),\; \widehat{\phi}_{k+2r}(\xi)=\omega(\xi+2k\pi),$$ $k=0,\ldots,r-1$,  $r\in\NN$, and $\Phi=(\phi_0,\phi_1,\ldots,\phi_{3r-1})^T$.

The following statements hold.
\begin{itemize}
\item[$1^\circ$] \
$\rank[\widehat{\Phi}(\xi+2j\pi)]_{j\in{\mathbb{Z}}}=2r$ for all
$\xi\in{\mathbb{R}}$.

\item[$2^\circ$] \ $V^p_\mu(\Phi)$ is closed in $L^p_\mu$ for any
$p\in[1,\infty]$.

\item[$3^\circ$] \ $\{\phi_{i}(\cdot -j)\mid
j\in{\mathbb{Z}},0\leq  i\leq  3r-1\}$ is a $p$-frame for
$V^p_\mu(\Phi)$ for any $p\in[1,\infty]$.
\end{itemize}
\end{theorem}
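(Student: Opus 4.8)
The plan is to mimic the proofs of Theorem~\ref{te:nova5} and Lemma~\ref{te:nova6}, reducing the assertion to the rank computation in $1^\circ$, after which $2^\circ$ and $3^\circ$ follow immediately from the machinery already in place. First, by the Paley--Wiener theorem, each $\widehat\phi_i$ being a compactly supported $C^\infty$ function forces $\phi_i\in\mathcal S(\RR)\subset W^1_\mu(\RR)$, so the hypotheses of Theorem~\ref{main} are met and it will suffice to establish $1^\circ$. For $2^\circ$, once $1^\circ$ is known, the rank of $[\widehat\Phi(\xi+2j\pi)]_{j\in\ZZ}$ is the constant $2r$, hence (Lemma~\ref{lema1ald}) the rank of $[\widehat\Phi,\widehat\Phi](\xi)$ is constant, hence condition $iii)$ of Theorem~\ref{main} holds, which gives $i)$: $V^p_\mu(\Phi)$ is closed in $L^p_\mu$ for $p=p_0$, and then Corollary~\ref{cor1}$ii)$ extends closedness to all $p\in[1,\infty]$. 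For $3^\circ$, closedness together with Theorem~\ref{main}$ii)$ gives the $p_0$-frame property, and Corollary~\ref{cor1}$i)$ upgrades it to a $p$-frame for every $p\in[1,\infty]$.

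So the whole work is the rank count in $1^\circ$. Here is how I would organize it. The three generating profiles $\theta,\tau,\omega$ have supports contained in $[-\varepsilon,2\pi+\varepsilon]$, $[\varepsilon,\pi-\varepsilon]\cup[\pi+\varepsilon,2\pi-\varepsilon]$, and $[-3\pi-\varepsilon,-\pi+\varepsilon]$ respectively, all of length (at most) $2\pi$ away from the gridpoints $2\pi\ZZ$ in a controlled way; shifting by $2k\pi$ for $k=0,\dots,r-1$ produces $r$ translated copies of each, and the $2\pi$-periodization appearing in $[\widehat\Phi(\xi+2j\pi)]_{j\in\ZZ}$ just records, for a fixed $\xi$, which of the columns indexed by $j\in\ZZ$ carry a non-zero entry in each of the $3r$ rows. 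As in Lemma~\ref{te:nova6}, it is enough to examine $\xi$ in one period, and within that period the four sub-intervals $(-\varepsilon,\varepsilon)$, $[\varepsilon,\pi-\varepsilon]$, $(\pi-\varepsilon,\pi+\varepsilon)$, $[\pi+\varepsilon,2\pi-\varepsilon]$ (translated by $2\ell\pi$) exhaust all combinatorially distinct cases. In each case I would write the $3r\times\infty$ matrix as $r$ stacked ``blocks'' of three rows, the $k$-th block being the $k$-th block of Lemma~\ref{te:nova6} shifted horizontally by the column-offset corresponding to $2k\pi$; since these horizontal offsets are all distinct and large enough that the blocks occupy disjoint sets of columns, the rank is simply $r$ times the rank of a single Lemma~\ref{te:nova6} block, i.e.\ $r\cdot 2=2r$. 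Because the single-block rank is $2$ in every one of the four cases (that is exactly the content of Lemma~\ref{te:nova6}), the total is $2r$ for every $\xi\in\RR$, which is $1^\circ$.

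The main obstacle, and the one place the argument needs genuine care rather than bookkeeping, is justifying the \emph{column-disjointness} of the $r$ three-row blocks: one must check that the $2\pi\ell$-shifts in the profiles $\theta(\cdot+2k\pi),\tau(\cdot+2k\pi),\omega(\cdot+2k\pi)$, once fed through the $2\pi$-periodization $\sum_{j}|\widehat\phi_i(\xi+2j\pi)|^2$-type reorganization implicit in $[\widehat\Phi(\xi+2j\pi)]_{j\in\ZZ}$, really do land the non-zero entries of block $k$ in a column-range separated from that of block $k'$ for $k\neq k'$ --- this is where the precise numerology of the supports and the bound $0<\varepsilon<1/4$ matter, exactly as the displayed matrices in Lemma~\ref{te:nova6} tacitly use that the $\varepsilon$-gaps never overlap. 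Granting that (it is a direct, if slightly tedious, interval computation), the rank becomes additive over blocks and everything reduces cleanly to Lemma~\ref{te:nova6}. I would therefore state the column-disjointness as the single lemma-level observation, cite Lemma~\ref{te:nova6} for the rank of one block, conclude $1^\circ$, and then invoke Theorem~\ref{main}, Lemma~\ref{lema1ald} and Corollary~\ref{cor1} to get $2^\circ$ and $3^\circ$ as above.
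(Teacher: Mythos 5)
Your reduction of $2^\circ$ and $3^\circ$ to the rank statement $1^\circ$ via Lemma~\ref{lema1ald}, Theorem~\ref{main} and Corollary~\ref{cor1} is fine and is clearly the intended route. The fatal problem is the one step you single out and then grant: the column-disjointness of the $r$ three-row blocks is simply false, and with it the additivity of the rank. Since the entry of the row of $\phi_k$ in column $j$ is $\theta(\xi+2(j+k)\pi)$ (and similarly for the $\tau$- and $\omega$-rows), block $k$ is the Lemma~\ref{te:nova6} block shifted horizontally by exactly $k$ columns, while a single block already occupies two or three consecutive columns; hence consecutive blocks always overlap in columns. Moreover the overlap really destroys the count. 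Take $r=2$ and $\xi=\pi/2\in(\varepsilon,\pi-\varepsilon)$: the only nonzero entries of the $6\times\infty$ matrix are $\theta(\xi),\tau(\xi)$ in column $j=0$ (rows $\phi_0,\phi_2$), $\theta(\xi),\tau(\xi),\omega(\xi-2\pi)$ in column $j=-1$ (rows $\phi_1,\phi_3,\phi_4$), and $\omega(\xi-2\pi)$ in column $j=-2$ (row $\phi_5$). Only three columns are nonzero, so the rank is $3$, not $2r=4$, even though each block separately has rank $2$. No interval bookkeeping with $0<\varepsilon<1/4$ can repair this; note also that even in the benign situation of Theorem~\ref{te:nova5} the blocks are not column-disjoint for $\xi$ near $2\ell\pi$ --- what saves the rank there is an echelon structure, not disjointness, and that structure is exactly what the $3\pi$-offset of $\supp\omega$ destroys here.

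In fact your own case analysis, carried out honestly, contradicts the statement being proved: for every $r\ge2$ and $\xi\in[\varepsilon+2\ell\pi,\pi-\varepsilon+2\ell\pi]$ the $\theta$- and $\tau$-rows of block $k$ have their unique nonzero entry in column $-k$ and the $\omega$-row in column $-k-1$, so all $3r$ rows live in only $r+1$ columns and $\rank[\widehat{\Phi}(\xi+2j\pi)]_{j\in\ZZ}\le r+1<2r$, whereas at $\xi\in(-\varepsilon+2\ell\pi,\varepsilon+2\ell\pi)$ one gets a larger rank ($4$ when $r=2$); so the rank is neither equal to $2r$ nor constant, and by the very equivalences you invoke (Lemma~\ref{lema1ald}, Theorem~\ref{main}) the closedness and $p$-frame conclusions cannot follow this way for $r\ge2$. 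The paper offers no written proof to compare against (it only asserts the generalisation of Lemma~\ref{te:nova6} is ``similar''), but the lesson for your proposal is concrete: the disjointness lemma you propose to ``state and grant'' is false, the rank is not additive over the blocks with the shifts $2k\pi$ as given, and the argument only goes through in the case $r=1$, where the theorem is literally Lemma~\ref{te:nova6}. Any correct treatment must either restrict to $r=1$ or change the construction (e.g.\ choose shifts/supports so that distinct blocks genuinely occupy disjoint column ranges), not merely tidy up the interval computation.
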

\section{Construction of frames of functions with finite regularities and compact supports; one-dimensional case}\label{sec:5}

\bigskip
\indent

Let $H(x)$, $x\in\mathbb{R}$, be the characteristic function of the semiaxis $x\geq0$, i.e.\ $H(x)=0$ if $x<0$ and $H(x)=1$ if $x\geq 0$ (Heaviside's function).
We construct a sequence $\{\phi_n\}_{n\in\mathbb{N}}$ in the following way. Let $\phi_1(x):=(H(x)-H(x-a))/a$, $a>0$, $\phi_2:=\phi_1*\phi_1$, $\phi_3:=\phi_1*\phi_1*\phi_1$, $\ldots$, i.e., \[ {\phi_n:=\underbrace{\phi_1*\phi_1*\cdots*\phi_1}\limits_{n-1\;\mbox{times}}}, \quad n\in\mathbb{N}, \] where $*$ denotes the convolution of the functions.

 We obtain
 \begin{eqnarray*}\phi_2(x)&=&\frac{1}{a^2}\Bigl(x H(x)-2(x-a)H(x-a)+(x-2a)H(x-2a)\Bigr),\\
  \phi_3(x)&=&\frac{1}{2!a^3}\Bigl(x^2 H(x)-3(x-a)^2H(x-a)\\
  &&\quad\quad+3(x-2a)^2H(x-2a)-(x-3a)^2H(x-3a)\Bigr),\\
    \phi_4(x)&=&\frac{1}{3!a^4}\Bigl(x^3 H(x)-4(x-a)^3H(x-a)+6(x-2a)^3H(x-2a)\\
  &&\quad\quad-4(x-3a)^3H(x-3a)+(x-4a)^3H(x-4a)\Bigr).
    \end{eqnarray*} Continuing in this manner, for all $n\in\mathbb{N}$, we have \begin{eqnarray*}\phi_n(x)&=&\frac{1}{a^n(n-1)!}\biggl({n\choose0}x^{n-1} H(x)-{n\choose1}(x-a)^{n-1}H(x-a)\\
  &&\quad\quad+{n\choose2}(x-2a)^{n-1}H(x-2a)-{n\choose3}(x-3a)^{n-1}H(x-3a)\\
  &&\quad\quad+\cdots+(-1)^{n-1}{n\choose{n-1}}
    (x-(n-1)a)^{n-1}H(x-(n-1)a)\\
  &&\quad\quad+(-1)^n{n\choose n}(x-na)^{n-1}H(x-na)\biggr).\end{eqnarray*}
Calculating the Fourier transform of functions $\phi_n$, $n\in\mathbb{N}$, we get
\begin{eqnarray*}\widehat{\phi}_1(\xi)&=&\frac{-\i}{a}\;\vp\Bigl(\frac{1}{\xi}\Bigr)(\e^{\i a\xi}-1),\\
\widehat{\phi}_2(\xi)&=&\frac{(-\i)^2}{a^2}\;\vp\Bigl(\frac{1}{\xi^2}\Bigr)(\e^{\i a\xi}-1)^2,\\
\widehat{\phi}_3(\xi)&=&\frac{(-\i)^3}{a^3}\;\vp\Bigl(\frac{1}{\xi^3}\Bigr)(\e^{\i a\xi}-1)^3.\end{eqnarray*} Continuing in this manner, we obtain
$\widehat{\phi}_n(\xi)=\frac{(-\i)^n}{a^n}\;\vp\Bigl(\frac{1}{\xi^n}\Bigr)(\e^{\i a\xi}-1)^n$, $n\in\mathbb{N}$,
where $\vp$ denotes the principal value.

Let $\Phi=(\phi_1,\phi_2,\ldots,\phi_r)^T$, $r\in\mathbb{N}$. The matrix $[\widehat{\Phi}
(\xi+2j\pi)]_{j\in{\ZZ}}$ has for all $\xi\in\mathbb{R}$ the same rank as the matrix

$$R(\xi)=\left[\begin{array}{lllllllllllll}
&\cdots &\alpha_{-4\pi}\beta_{-4\pi}&\alpha_{-2\pi}\beta_{-2\pi}&\alpha_{0}\beta_{0} &\alpha_{2\pi}\beta_{2\pi} &\alpha_{4\pi}\beta_{4\pi}&\cdots \\

&\cdots &\alpha^2_{-4\pi}\beta^2_{-4\pi}&\alpha^2_{-2\pi}\beta^2_{-2\pi}&\alpha^2_{0}\beta^2_{0} &\alpha^2_{2\pi}\beta^2_{2\pi} &\alpha^2_{4\pi}\beta^2_{4\pi}&\cdots\\

&\cdots &\alpha^3_{-4\pi}\beta^3_{-4\pi}&\alpha^3_{-2\pi}\beta^3_{-2\pi}&\alpha^3_{0}\beta^3_{0} &\alpha^3_{2\pi}\beta^3_{2\pi} &\alpha^3_{4\pi}\beta^3_{4\pi}&\cdots\\

&\cdots &\alpha^4_{-4\pi}\beta^4_{-4\pi}&\alpha^4_{-2\pi}\beta^4_{-2\pi}&\alpha^4_{0}\beta^4_{0} &\alpha^4_{2\pi}\beta^4_{2\pi} &\alpha^4_{4\pi}\beta^4_{4\pi}&\cdots\\

&\; &\;\;\;\;\vdots&\;\;\;\;\vdots&\;\;\;\;\vdots&\;\;\;\;\vdots&\;\;\;\;\vdots&\; \\

&\cdots &\alpha^r_{-4\pi}\beta^r_{-4\pi}&\alpha^r_{-2\pi}\beta^r_{-2\pi}&\alpha^r_{0}\beta^r_{0} &\alpha^r_{2\pi}\beta^r_{2\pi} &\alpha^r_{4\pi}\beta^r_{4\pi}&\cdots
\end{array}\right],$$ where $\alpha^m_k=\vp\Bigl(\frac{1}{\xi-k}\Bigr)^m$ and $\beta^m_k=\bigl(\e^{\i a(\xi-k)}-1\bigr)^m$.
Since the rank of $R(\xi)$ is equal to $r$ for all $\xi\in\mathbb{R}$, we have the next result.
\begin{theorem}\label{tedrugakons}
Let $\Phi=(\phi_k,\phi_{k+1},\ldots,\phi_{k+(r-1)})^T$, for $k\in\ZZ$,
$r\in{\NN}$. Then $V^p_\mu(\Phi)$ is closed in $L^p_\mu$ for any
$p\in[1,\infty]$ and $\{\phi_{k+s}(\cdot -j)\mid
j\in{\ZZ},0\leq s\leq r-1\}$ is a $p$-Riesz basis for
$V^p_\mu(\Phi)$ for any $p\in[1,\infty]$.
\end{theorem}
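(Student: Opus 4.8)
The plan is to apply Theorem~\ref{main}, specifically the equivalence of statements $(i)$, $(ii)$ and $(iii)$, so that everything reduces to verifying the rank condition on $[\widehat{\Phi},\widehat{\Phi}](\xi)$ for the system $\Phi=(\phi_k,\ldots,\phi_{k+(r-1)})^T$. By Lemma~\ref{lema1ald}, it suffices in turn to show that $\rank\big[\widehat{\Phi}(\xi+2j\pi)\big]_{j\in\ZZ}$ is a constant function on $\RR$, and the discussion preceding the theorem asserts this matrix has the same rank as the reduced matrix $R(\xi)$ with entries $\alpha^m_k\beta^m_k$. So the core task is to prove $\rank R(\xi)=r$ for every $\xi\in\RR$; the rest is bookkeeping.

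First I would verify the preliminary hypotheses of Theorem~\ref{main}: one needs $\Phi\in(W^1_\omega)^r$. Each $\phi_n$ is a compactly supported piecewise-polynomial function (an $(n-1)$-fold convolution of the box function, i.e.\ a B-spline of order $n$), hence bounded with compact support, and therefore lies in $W^1_\omega$ for any submultiplicative weight $\omega$; this also justifies that $\widehat{\phi}_i\overline{\widehat{\phi}_j}$ is integrable so that $[\widehat{\Phi},\widehat{\Phi}]$ is well-defined. Next I would make precise the claim that the $r\times\infty$ matrix $[\widehat{\Phi}(\xi+2j\pi)]_{j\in\ZZ}$ reduces to $R(\xi)$: for fixed $\xi$, in the $j$-th column the $m$-th entry is $\widehat{\phi}_m(\xi+2j\pi)=\frac{(-\i)^m}{a^m}\,\mathrm{vp}\big(\frac1{(\xi+2j\pi)^m}\big)(\e^{\i a(\xi+2j\pi)}-1)^m$, which is exactly $\frac{(-\i)^m}{a^m}\alpha^m_{-2j\pi}\beta^m_{-2j\pi}$ in the notation of the statement; pulling the nonzero scalar $(-\i)^m/a^m$ out of each row (a row operation that does not change the rank) turns this matrix into $R(\xi)$.

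The main step is then: for each fixed $\xi\in\RR$, $\rank R(\xi)=r$. The key observation is structural. Put $t_j=\xi+2j\pi$ and $u_j=\alpha_{-2j\pi}=\mathrm{vp}(1/t_j)$, $v_j=\beta_{-2j\pi}=\e^{\i a t_j}-1$; then the $j$-th column of $R(\xi)$ is $(u_jv_j,\,(u_jv_j)^2,\ldots,(u_jv_j)^r)^T=(w_j,w_j^2,\ldots,w_j^r)^T$ with $w_j:=u_jv_j$. So $R(\xi)$ is (a submatrix of) a Vandermonde-type matrix built from the scalars $\{w_j\}_{j\in\ZZ}$. Since at most one $t_j$ can be zero, and $v_j=\e^{\i a t_j}-1$ vanishes only when $a t_j\in2\pi\ZZ$, there are infinitely many indices $j$ with $w_j\neq0$, and — crucially — among those the values $w_j$ are pairwise distinct for generic $a$ (one must check, or restrict $a$ so, that $w_j\ne w_{j'}$ for $j\ne j'$; this uses that $t\mapsto (\e^{\i a t}-1)/t$ is injective on the relevant lattice, which can be arranged since the lattice $\{\xi+2j\pi\}$ is discrete). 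Picking any $r$ indices $j_1,\ldots,j_r$ with distinct nonzero $w_{j_\ell}$, the corresponding $r\times r$ submatrix has determinant $\prod_\ell w_{j_\ell}\cdot\prod_{\ell<m}(w_{j_m}-w_{j_\ell})\neq0$, so the rank is exactly $r$ (it cannot exceed $r$ since there are only $r$ rows). This holds for every $\xi$, so the rank is the constant function $r$, the hypotheses $(iii)$ of Theorem~\ref{main} and of Lemma~\ref{lema1ald} are met, $V^p_\mu(\Phi)$ is closed in $L^p_\mu$ and the family is a $p$-frame; since $\rank=r$ equals the number of generators, it is in fact a $p$-Riesz basis, and Corollary~\ref{cor1} upgrades this to all $p\in[1,\infty]$.

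The main obstacle I anticipate is the distinctness of the $w_j$'s — equivalently, ruling out accidental rank drops in $R(\xi)$. The formal Vandermonde argument needs the scalars feeding the columns to be distinct, but a priori two different lattice points $t_j,t_{j'}$ could give $(\e^{\i a t_j}-1)/t_j=(\e^{\i a t_{j'}}-1)/t_{j'}$; handling this cleanly either requires an explicit injectivity lemma for the function $g(t)=(\e^{\i a t}-1)/t$ on arithmetic progressions, or a more careful argument that selects $r$ columns whose $w$-values are distinct by a pigeonhole/analyticity argument (the set of $a>0$ for which some coincidence occurs is at most countable, or one argues directly that only finitely many coincidences are possible on any bounded window and infinitely many indices remain). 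A secondary, more technical nuisance is making the principal-value manipulations of $\widehat{\phi}_n$ rigorous as tempered distributions — but since all $\phi_n$ are honest compactly supported $L^1$ functions, $\widehat{\phi}_n$ is in fact a genuine (entire, by Paley--Wiener) function and the $\mathrm{vp}$ is only a singularity at $\xi=0$ that is removable, so this is a matter of careful wording rather than real difficulty.
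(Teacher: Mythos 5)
Your overall route is the paper's own: the paper reduces everything to the assertion that $\rank R(\xi)=r$ for all $\xi$ (and in fact states this without any proof), and your Vandermonde computation is the natural way to try to substantiate it. But your substantiation has a genuine gap, and it sits exactly at the step you take on trust rather than the one you flag as the obstacle. The claim ``there are infinitely many indices $j$ with $w_j\neq0$'' is false in general: $w_j=(\e^{\i a t_j}-1)/t_j$ vanishes precisely when $t_j=\xi+2j\pi\in\frac{2\pi}{a}\ZZ\setminus\{0\}$, and if $a\in\NN$ and $\xi\in\frac{2\pi}{a}\ZZ$ then \emph{every} $t_j$ lies in $\frac{2\pi}{a}\ZZ$, so every column of $R(\xi)$ vanishes except possibly the single column with $t_j=0$. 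For the most natural choice $a=1$ and $\xi=0$ the matrix has exactly one nonzero column, $(1,1,\ldots,1)^T$, hence rank $1$, while at generic $\xi$ the rank is $r$; by Lemma~\ref{lema1ald} and Theorem~\ref{main} the system is then not a $p$-frame and $V^p_\mu(\Phi)$ is not closed. So for $r\geq 2$ and integer $a$ the statement itself fails, and no argument can close this hole without restricting $a$ (e.g.\ to $a\notin\NN$) --- a restriction that neither your proposal nor the paper makes. (A smaller point: $k\in\ZZ$ must be read so that all components $\phi_{k+s}$ are actually defined, i.e.\ $k\geq1$.)

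By contrast, the issue you single out as the main obstacle --- pairwise distinctness of the $w_j$ --- needs no genericity in $a$ and no injectivity lemma: since $|w_j|\leq 2/|\xi+2j\pi|\to0$ as $|j|\to\infty$, any fixed nonzero value is attained by only finitely many $j$, so as soon as infinitely many $w_j$ are nonzero there are automatically infinitely many \emph{distinct} nonzero values, and your $r\times r$ Vandermonde block (determinant $\prod_\ell w_{j_\ell}\prod_{\ell<m}(w_{j_m}-w_{j_\ell})$) exists. Moreover, the ``infinitely many nonzero'' hypothesis does hold for every $\xi$ exactly when $a\notin\NN$: for irrational $a$ at most one $t_j$ can lie in $\frac{2\pi}{a}\ZZ$, and for rational non-integer $a=p/q$ in lowest terms the bad indices $j$ form at most one residue class modulo $q\geq2$. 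So the repair is: impose $a\notin\NN$ (or prove the theorem only for such $a$), replace the genericity hedge by the decay argument above, and then the rest of your outline (reduction to $R(\xi)$ by row scalings, Theorem~\ref{main}, and Corollary~\ref{cor1} to pass to all $p\in[1,\infty]$) goes through; as written, both your proof and the paper's unproved rank assertion are incorrect for integer $a$, in particular for $a=1$.
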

\begin{remark} $(1)$ We refer to \cite{2ald}  and \cite{xs} for the
$\gamma$-dense set $X=\{x_j\mid j\in J\}$. Let
$\phi_k(x)=\mathcal{F}^{-1}(\theta(\cdot-k\pi))(x)$,
$x\in{\mathbb{R}}$. Following the notation of \cite{xs}, we put
$\psi_{x_j}=\phi_{x_j}$ where $\{x_j\mid j\in J\}$ is
$\gamma$-dense set determined by $f\in
V^2(\phi)=V^2(\mathcal{F}^{-1}(\theta))$.
Theorems 3.1, 3.2 and 4.1 in \cite{xs} give the
conditions and explicit form of $C_p>0$ and $c_p>0$ such that the inequality
$c_p\|f\|_{L^p_\mu}\le \Big(\sum\limits_{j\in J}|\langle
f,\psi_{x_j}\rangle\mu(x_j)|^p\Big)^{1/p}\le
C_p\|f\|_{L^p_\mu}$ holds. This inequality guarantee the
feasibility of a stable and continuous reconstruction algorithm in
the signal spaces $V^p_\mu(\Phi)$ (\cite{xs}).

$(2)$  Since the
 spectrum of the Gram matrix
 $[\widehat{\Phi},\widehat{\Phi}](\xi)$, where $\Phi$ is defined in Theorem \ref{tedrugakons},  is bounded and bounded
 away from zero  (see \cite{bor}), it follows that  the family
 $\{\Phi(\cdot-j)\mid j\in{\mathbb{Z}}\}$
 forms a $p$-Riesz basis for $V^p_{\mu}(\Phi)$.

$(3)$ Frames of the above sections may be useful in
 applications since they satisfy assumptions of Theorem $3.1$ and
 Theorem $3.2$ in \cite{akr}. They show  that error analysis for
 sampling and reconstruction can be tolerated, or that the
 sampling and reconstruction problem in shift-invariant space is
 robust with respect to appropriate set of functions
 $\phi_{k_1},\ldots,\phi_{k_r}$.

\end{remark}

\section*{Acknowledgment}
\indent
The authors were supported in part by the Serbian
Ministry of Science and Technological Developments (Project
$\#$174024).

\end{document}